\newtheorem{thm}{Theorem}[section]
\newtheorem{lemma}[thm]{Lemma}
\newtheorem{proposition}[thm]{Proposition}
\newtheorem{corollary}[thm]{Corollary}
\theoremstyle{definition}
\newtheorem{remark}[thm]{Remark}
\newtheorem{convention and reminder}{Convention and Reminder}
\newtheorem{convention and remark}{Convention and Remark}
\newtheorem{definition and remark}{Definition and Remark}
\newtheorem{reminders and definition}{Reminders and Definition}
\newtheorem{notation and remarks}{Notation and Remarks}
\newtheorem{notation and remark}{Notation and Remark}
\newtheorem{example}[thm]{Example}
\newcommand \h {\textup{\^}}
\newcommand \N {\ensuremath{\mathrm{\bf N}}}
\newcommand\Tor{\operatorname{Tor}}
\newcommand\codim{\operatorname{codim}}
\newcommand\reg{\operatorname{reg}}
\newcommand\Ker{\operatorname{\Ker}}
\newcommand\e{\operatorname{e}}
\newcommand\Sec{\operatorname{Sec}}
\newcommand\gin{\operatorname{gin}}
\newcommand \p {\partial}
\newcommand \pd {\operatorname{proj.dim}}
\newcommand \lra {\rightarrow}
\newcommand \llra {\longrightarrow}
\newcommand \st[1] {\stackrel{#1}{\rightarrow}}
\newcommand \cone[1] {C_{#1}(\overline{\varphi})}
\def\ds{\displaystyle}
\def\P{{\mathbb P}}
\def\Z{{\mathbb Z}}
\begin{document}

\title[Syzygies and geometric properties of projective schemes]
{On Syzygies, degree, and geometric properties of projective schemes with property $\textbf{N}_{3,p}$} 

\author[J.\ Ahn and S.\ Kwak]{Jeaman Ahn and Sijong Kwak${}^{*}$}
\address{Department of Mathematics Education, Kongju National University, 182, Shinkwan-dong, Kongju, Chungnam 314-701, Republic of Korea}
\email{jeamanahn@kongju.ac.kr}
\thanks{${}^{*}$ Corresponding author.}
\thanks{1. The first author was supported by the research grant of the Kongju National University in 2013 (No. 2013-0535)}

\address{Department of Mathematics, Korea Advanced Institute of Science and Technology,
373-1 Gusung-dong, Yusung-Gu, Daejeon, Korea}
\email{skwak@kaist.ac.kr}
\thanks{2. The second author was supported by the National Research Foundation of Korea(NRF) grant funded by the Korea government(MSIP) (No. 2013042157).}

\begin{abstract}
For an algebraic set $X$ ~(union of varieties) embedded in projective space, we say that $X$ satisfies
property $\textbf{N}_{d,p}$, $(d\ge 2)$ if the $i$-th syzygies of the homogeneous coordinate ring are generated by elements
of degree $< d+i$ for $0\le i\le p$ (see \cite{EGHP2} for details).
Much attention has been paid to linear syzygies of quadratic schemes $(d=2)$ and their geometric interpretations~(cf.~\cite{AK},\cite{EGHP1},\cite{HK},\cite{GL2},\cite{KP}).
However, not very much is actually known about the case satisfying property $\textbf{N}_{3,p}$.
In this paper, we give a sharp upper bound on the maximal length of a zero-dimensional linear section of $X$ in terms of graded
Betti numbers (Theorem~\ref{3-regulr-multisecants} (a)) when $X$ satisfies property $\textbf{N}_{3,p}$. In particular, if $p$ is the codimension $e$ of $X$ then the degree of $X$ is less than or equal to $\binom{e+2}{2}$, and
equality holds if and only if $X$ is arithmetically Cohen-Maucalay with $3$-linear resolution (Theorem~\ref{3-regulr-multisecants} (b)). This is a 
generalization of the results of Eisenbud et al. (\cite{EGHP1,EGHP2}) to the case of $\N_{3,p}$, $(p\leq e)$.
\end{abstract}

\keywords{syzygies, secants, graded mapping cone, Castelnuovo-Mumford regularity}
\subjclass[2010]{Primary:14N05; Secondary:13D02}

\maketitle
\tableofcontents
\thispagestyle{empty}

\section{Introduction}
Throughout this paper, we will work with a non-degenerate reduced algebraic set (union of varieties) $X$ of dimension $n$ and codimension $e$ in $\P^{n+e}$ defined over an algebraically closed field $k$ of characteristic zero.
We write  $I_X:=\bigoplus_{m=0}^{\infty} H^0(\mathcal I_X(m))$ for the defining ideal of $X$ in the polynomial ring $R=k[x_0, x_1 ,\ldots, x_{n+e}]$.

It is known that if $X$ is a variety then $X$ satisfies the condition $\deg(X)\geq e+1$ and  if equality holds then we say that $X$ has minimal degree. Del Pezzo \cite{DP} classified surfaces of minimal degree and Bertini \cite{Ber} extended the classfication to all dimensions. This classification was again extended to equidimenisonal algebraic sets that are connected in codimension one by Xamb\'{o} \cite{X}. In this case, they are all 2-regular (in the sense of Castelnuovo-Mumford) and vice versa (\cite{EG}). 

For more general algebraic sets, the inequality $\deg(X)\geq e+1$ does not hold. A simple example is a set of two skew lines in $\P^3$, which is $2$-regular.  In \cite[Theorem~0.4]{EGHP1} authors give a classification of $2$-regular  algebraic sets in terms of ``{\it smallness}''. This means that for every linear subspace $\Lambda$ such that $\Lambda\cap X$ is finite, the scheme $\Lambda\cap X$ is linearly independent. 
From this result if $X$ is $2$-regular then $\deg(X)\leq e+1$ and the extremal degree holds if and only if $X$ is a reduced aCM scheme whose defining ideal has $2$-linear resolution.

Recall that $2$-regularity means the syzygies of $I_X$ are all linear. In a different paper \cite{EGHP2} carried out by the same authors, they introduce the notion $\N_{2,p}$ (i.e. the syzygies of $I_X$ are linear for $p$ steps) and show that if $X$ satisfies the property $\N_{2,p}$ then for every linear subspace $\Lambda$ of dimension $\leq p$ such that $\Lambda\cap X$ is finite, the scheme $\Lambda\cap X$ is $2$-regular and $\deg(\Lambda\cap X)\leq \dim\Lambda+1$ (See \cite[Theorem~1.1]{EGHP2}). Since a small algebraic set is $2$-regular, this inequality implies that if $X$ satisfies the property $\N_{2,e}$ then $X$ is $2$-regular \cite[Corollary~1.8]{EGHP2}.

Summing up these results, we have the following theorem:

\begin{thm}[\cite{EGHP1, EGHP2}]\label{Eisenbud et al}
 Let $X\subset \P^{n+e}$ be a non-degenerate algebraic set (union of varieties) of dimension $n$ and codimension $e$. Suppose that $X$ satisfies property $\N_{2,p}$. 
\begin{itemize}
  \item[(a)] For a linear subspace $\Lambda$ of dimension $\leq p$, if $X\cap \Lambda$ is finite then $$\deg(X\cap \Lambda)\leq \dim\Lambda+1;$$
  \item[(b)] In particular, if $X$ satisfies property $\N_{2,e}$ (i.e. $X$ is $2$-regular) then $\deg(X)\leq e+1$ and  the extremal degree holds if and only if $X$ is a reduced aCM scheme whose defining ideal has $2$-linear resolution.
\end{itemize}

\end{thm}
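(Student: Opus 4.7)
The strategy is to reduce part (a) to the $0$-dimensional case via generic linear sections, and then derive part (b) as a specialization. The crucial technical input is that property $\N_{2,p}$ is preserved under generic hyperplane sections: if $X \subset \P^{n+e}$ satisfies $\N_{2,p}$ and $H$ is a generic hyperplane, then the scheme-theoretic section $X \cap H$ inside $H \cong \P^{n+e-1}$ again satisfies $\N_{2,p}$. The standard way to see this is to apply the mapping cone to multiplication by a generic linear form $\ell$ on a minimal free resolution of $R/I_X$; generic choice of $\ell$ ensures it is a nonzerodivisor modulo the irrelevant maximal ideal's worth of embedded phenomena, so the cone, after cancellations, still has linear strand up through homological degree $p$.

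For (a), I would proceed as follows. Given $\Lambda$ of dimension $q \leq p$ with $X \cap \Lambda$ finite, choose $n+e-q$ successive generic hyperplanes containing $\Lambda$ and intersect. At each stage the section still satisfies $\N_{2,p}$, and after intersecting with all of them we arrive at $Y := X \cap \Lambda \subset \Lambda \cong \P^{q}$, a $0$-dimensional scheme satisfying $\N_{2,p}$, and in particular $\N_{2,q}$ (since $q \leq p$). By the statement quoted from \cite[Corollary~1.8]{EGHP2}, this forces $Y$ to be $2$-regular as a subscheme of $\P^q$. The $2$-regularity of $Y$ gives $H^1(\mathcal I_Y(1)) = 0$, hence the restriction map
\[
H^0(\P^q,\mathcal O(1)) \longrightarrow H^0(Y,\mathcal O_Y(1)) = H^0(Y,\mathcal O_Y)
\]
is surjective. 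Since the source has dimension $q+1$ and the target has dimension $\deg Y$, we conclude $\deg(X \cap \Lambda) = \deg Y \leq q+1 = \dim \Lambda + 1$.

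For (b), apply (a) with $p = e$ to a generic linear subspace $\Lambda$ of dimension $e$, which by Bertini meets $X$ in a $0$-dimensional scheme of length $\deg X$; this gives $\deg X \leq e+1$. For the equality case, if $\deg X = e+1$ then a generic $0$-dimensional linear section $X \cap \Lambda$ is $2$-regular of maximal length $e+1$, so it is in linearly general position, and its Hilbert function is $(1,e+1,e+1,\dots)$. Lifting this back to $X$ via the exact sequences relating $X$ to its generic hyperplane sections, one shows inductively that the Hilbert function of $X$ coincides with that of a variety of minimal degree. Combined with the $2$-regularity of $X$ itself (which follows from $\N_{2,e}$), this pins down both the Hilbert function and the Betti table to the $2$-linear aCM shape. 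Conversely, an aCM scheme with $2$-linear resolution has the Hilbert function of minimal degree and hence $\deg X = e+1$ is immediate.

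The main obstacle I anticipate is the preservation of $\N_{2,p}$ under generic hyperplane sections. The subtlety is that one must keep track of the distinction between $I_X/(\ell) \subset R/(\ell)$ and the saturated ideal $I_{X \cap H}$ of the scheme-theoretic intersection, which can a priori differ in low degrees by embedded or isolated components supported on $H$. A careful choice of $\ell$ avoiding associated primes of $R/I_X$ other than the maximal ideal, together with the fact that $\N_{2,p}$ is a statement only about Betti numbers in a specific range, allows the mapping cone argument to go through without losing syzygies in the linear strand. Once this is in hand, the rest of the proof is essentially a bookkeeping exercise using the long exact sequences of restriction.
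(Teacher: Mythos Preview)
Your approach has a genuine gap, and it is not the route taken either by the cited papers or by the present paper's own machinery.

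The central problem is your reduction step: you want to cut by ``generic hyperplanes containing $\Lambda$'' and claim that each section still satisfies $\N_{2,p}$. But such hyperplanes are, by construction, \emph{not} generic; they are constrained to lie in a proper linear subsystem of $|H^0(\mathcal O(1))|$, and $\Lambda$ itself may sit in special position relative to $X$. The standard mapping-cone argument for preserving $\N_{2,p}$ under a hyperplane section requires the linear form to be a nonzerodivisor on $R/I_X$ and, more delicately, requires control of the saturation step $I_X + (\ell) \leadsto I_{X\cap H}$; neither is available once the hyperplane is forced to contain a prescribed $\Lambda$. Worse, after $n+e-q-1$ such sections you are in $\P^{q+1}$ and the \emph{last} hyperplane is $\Lambda$ itself, with no freedom whatsoever. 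There is no reason this final section should preserve $\N_{2,p}$, and in general it does not. Your acknowledgment of the ``main obstacle'' at the end is exactly this, and it is not a technicality that can be patched by a careful choice of $\ell$.

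By contrast, the paper's method (developed here for $\N_{3,\alpha}$, but applicable to $\N_{2,\alpha}$ as noted in Remark~\ref{remk:2} and the remark following the proof of Theorem~\ref{3-regulr-multisecants}(b)) avoids sections altogether. One picks a linear subspace $\Lambda'\subset L^{\alpha}$ of dimension $\alpha-1$ \emph{disjoint from $X$} and projects $\pi_{\Lambda'}:X\to\P^{n+e-\alpha}$. The coordinate ring $R/I_X$ is then regarded as a module over the smaller polynomial ring $S_\alpha$, and the elimination mapping cone (Theorem~\ref{Thm:Mapping_cone_theorem}, Corollary~\ref{N_{d,p-1}as a S-module}) transports property $\N^R_{d,\alpha}$ to property $\N^{S_\alpha}_{d,0}$. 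For $d=2$ this forces $R/I_X$ to be generated over $S_\alpha$ in degrees $\le 1$, so the surjection $S_\alpha\oplus S_\alpha(-1)^{\alpha}\twoheadrightarrow R/I_X$ sheafifies to bound the fiber length by $1+\alpha$. The point is that one works with $R/I_X$ as a module over a subring rather than passing to quotient rings, and the required finiteness comes for free from $\Lambda'\cap X=\emptyset$. No genericity beyond that disjointness is needed, which is why the bound holds for \emph{every} $L^\alpha$ with finite intersection, not merely a general one.
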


In this context, it is natural to ask what we can say about algebraic sets satisfying property $\N_{3,\alpha}$ with the Green-Lazarsfeld index $p$. (i.e. $p$ is the largest $k\ge 0$ such that $X$ satisfies property $\N_{2,k}$. See \cite{BCR} for the definition).
Although linear syzygies of quadratic schemes and their geometric properties have been understood by many authors ~(cf.~\cite{AK, AR, EGHP1, HK, GL2, KP}), not very much is actually known about the geometric properties of algebraic sets satisfying $\textbf{N}_{3,\alpha}$. 

This paper is a starting point for a generalization of Theorem~\ref{Eisenbud et al} to the case of $\N_{3,\alpha}$. Our main result is the following:

\begin{thm}\label{3-regulr-multisecants}
Let $X$ be a non-degenerate algebraic set in $\P^{n+e}$ of codimension $e\ge 1$
satisfying \textit{property} $\textbf{N}_{3,\alpha}$ with the Green-Lazarsfeld index $p$. For a linear space $L^{\alpha}$ of dimension $\alpha\leq e$, 
\begin{itemize}
\item[(a)] if $X\cap L^{\alpha}$ is a finite scheme then,
\begin{equation*}
\deg(X\cap L^{\alpha})\leq 1+ \alpha+\min\left\{\frac{|\alpha-p|(\alpha+p+1)}{2}, \beta^R_{\alpha,2}(R/I_X)\right\};
\end{equation*}
\item[(b)] In particular, if $X$ satisfies property $\N_{3,e}$ then $\deg(X)\leq \binom{e+2}{2}$ and the extremal degree holds if and only if $X$ is a reduced aCM scheme whose defining ideal has $3$-linear resolution. (hence 3-regular).
\end{itemize}
\end{thm}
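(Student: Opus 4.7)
The proof centers on the zero-dimensional scheme $Y := X \cap L^\alpha$ regarded as a subscheme of $L^\alpha \cong \P^\alpha$. Since $\dim Y = 0$ we have $\deg Y = h_Y(m)$ for $m \gg 0$, so the plan is first to establish that $Y$ is 3-regular in $L^\alpha$, reducing the problem to $\deg Y = h_Y(2) = \binom{\alpha+2}{2} - \dim(I_{Y/L^\alpha})_2$, and then to bound $h_Y(2)$ from above by producing enough independent quadrics in $I_{Y/L^\alpha}$ under each hypothesis in the $\min$.

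To extract 3-regularity together with cubic-strand Betti information, I would form an iterated graded mapping cone: tensor the minimal free resolution of $R/I_X$ with the Koszul complex on the $c := n+e-\alpha$ linear forms $\ell_1, \ldots, \ell_c$ that cut out $L^\alpha$, producing a (generally nonminimal) graded $R$-resolution of $R/(I_X + (\ell_1, \ldots, \ell_c)) \cong k[L^\alpha]/I_Y$. Because $X$ satisfies $\N_{3,\alpha}$, the cubic strand of $R/I_X$ terminates by homological position $\alpha$, and no generator of degree $> i+2$ can appear at position $i \leq \alpha$ of the cone; this forces $\reg(k[L^\alpha]/I_Y) \leq 2$, giving 3-regularity. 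Tracking the cubic-strand contribution at position $\alpha$ inside the cone then yields $\deg Y - (1+\alpha) \leq \beta^R_{\alpha,2}(R/I_X)$, which is the second term of the $\min$ in (a).

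The bound $\tfrac{|\alpha-p|(\alpha+p+1)}{2}$ uses the Green-Lazarsfeld index and Theorem~\ref{Eisenbud et al}. For $p \leq \alpha$, choose a subflag $L^p \subset L^\alpha$ so that $Z := X \cap L^p$ is zero-dimensional; since $X$ satisfies $\N_{2,p}$, Theorem~\ref{Eisenbud et al} implies $Z$ is 2-regular with $\deg Z \leq p+1$, hence carries at least $\binom{p+1}{2}$ independent quadric relations in $L^p$. Comparing graded Betti numbers of $X \cap L^p$ and $X \cap L^\alpha$ via the mapping cone for the extension $L^p \subset L^\alpha$ propagates these quadric relations into the bound $\dim(I_{Y/L^\alpha})_2 \geq \binom{p+1}{2}$, giving $\deg Y \leq \binom{\alpha+2}{2} - \binom{p+1}{2} = 1 + \alpha + \tfrac{(\alpha-p)(\alpha+p+1)}{2}$. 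When $p \geq \alpha$, $X$ automatically satisfies $\N_{2,\alpha}$ and Theorem~\ref{Eisenbud et al}(a) already gives $\deg Y \leq \alpha+1$, consistent with the absolute value in the formula.

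For part~(b), apply (a) with $\alpha = e$ to a generic $L^e$ meeting $X$ transversely in $\deg X$ reduced points (by Bertini and reducedness of $X$); the worst-case $p = 0$ of the first bound yields $\deg X \leq 1 + e + \binom{e+1}{2} = \binom{e+2}{2}$. For the extremal characterization, equality forces every inequality in the proof to be sharp, which in particular forces $Y$ to have pure 3-linear resolution in $L^e$; the invariance of graded Betti numbers under generic Artinian reduction of an aCM scheme then lifts this to a 3-linear resolution of $I_X$ and to the Cohen-Macaulayness of $X$. The main technical obstacle is the mapping cone step: the cone is nonminimal, so potential cubic-strand cancellations at the critical homological position $\alpha$ must be shown to be harmless, and the propagation argument from the flag $L^p \subset L^\alpha$ needs a delicate comparison of resolutions to turn abstract Betti inequalities into the concrete quadric count used above.
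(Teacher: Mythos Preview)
Your approach for part (a) has a genuine gap. The iterated mapping cone obtained by tensoring the minimal free resolution of $R/I_X$ with the Koszul complex on the linear forms $\ell_1,\dots,\ell_c$ cutting out $L^\alpha$ is a free resolution of $R/(I_X+(\ell_1,\dots,\ell_c))$ \emph{only if} the $\ell_i$ form a regular sequence on $R/I_X$. But this essentially never holds here: when $\alpha<e$ the expected dimension of $X\cap L^\alpha$ is negative, so a finite nonempty intersection is automatically improper; and even when $\alpha=e$ the forms are regular on $R/I_X$ only if $X$ is already arithmetically Cohen--Macaulay, which is not assumed. Without exactness, you cannot read off either the $3$-regularity of $Y$ or the bound $\deg Y-(1+\alpha)\le\beta^R_{\alpha,2}$ from the cubic strand of the cone. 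The paper circumvents exactly this obstruction by working in the opposite direction: instead of \emph{restricting} to $L^\alpha$, it chooses $\Lambda=\P^{\alpha-1}\subset L^\alpha$ disjoint from $X$ and \emph{projects} from $\Lambda$, so that $R/I_X$ becomes a finitely generated $S_\alpha$-module. The elimination mapping cone sequence (Theorem~\ref{Thm:Mapping_cone_theorem}) then compares $\Tor^R$ with $\Tor^{S_t}$ without any regularity hypothesis, and the fibre length is bounded by $1+\alpha+\beta^{S_\alpha}_{0,2}$, which in turn is bounded both by $\beta^R_{\alpha,2}$ and, via the chain of multiplicative maps using $\N^{S_p}_{2,0}$, by $\tfrac{(\alpha-p)(\alpha+p+1)}{2}$.

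Your argument for part (b) is circular. You correctly observe that in the extremal case the generic linear section $Y=X\cap L^e$ is a zero-dimensional scheme of degree $\binom{e+2}{2}$ in $\P^e$ which is $3$-regular, and hence has a $3$-linear resolution. But you then invoke ``invariance of graded Betti numbers under generic Artinian reduction of an aCM scheme'' to lift this to $X$---which presupposes that $X$ is aCM, precisely the conclusion to be proved. The paper's proof of the extremal case is substantially harder and proceeds in four steps through generic initial ideals: it first shows (Lemma~\ref{cor:303}) that $I_X$ has no quadric generators, then uses the Eliahou--Kervaire description of Betti numbers of $\gin(I_X)$ together with the behaviour of $\gin$ under hyperplane section to force $\gin(I_X)_3=k[x_0,\dots,x_{e-1}]_3$ and to rule out any minimal generators of $\gin(I_X)$ in degree $\ge 4$. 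Only then does the $3$-linear resolution (and hence aCM-ness) of $X$ follow.
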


There is an algebraic set satisfying property $\N_{3,e}$ that is not $3$-regular (see Example~\ref{example:2014}). This means the condition $\N_{3,e}$ does not imply $3$-regularity.  Besides, we do not know a nice characterization of algebraic sets having $3$-linear resolution corresponding to the case of $2$-linear resolution (\cite[Theorem~0.4]{EGHP1}). Nevertheless, our result is just such a generalization of Theorem~\ref{Eisenbud et al} to the case of $\N_{3,\alpha}$ with an alternative perspective and approach. 

To achieve the result, we use the elimination mapping cone construction for graded modules and apply it to give a systematic approach to the relation between multisecants and graded Betti numbers. 
From the maximal bound for the length of finite linear sections of algebraic sets satisfying property $\N_{3,e}$ (in terms of the graded Betti numbers), the extremal cases can be characterized by the combinatorial property of the syzygies of generic initial ideals. 

We also provide some illuminating examples of our results and corollaries via calculations done with {\it Macaulay 2} \cite{GS}.

\section{Preliminaries}
\subsection{Notations and Definitions}
For precise statements, we begin with notations and definitions used in the subsequent sections:
\begin{itemize}
\item[$\bullet$] We work over an algebraically closed field $k$ of characteristic zero.
\item[$\bullet$] Unless otherwise stated, $X$ is a non-degenerate, reduced algebraic sets~(union of varieties) of dimension $n$ in $\P^{n+e}$.

\item[$\bullet$] For a finitely generated graded $R=k[x_0, x_1,\ldots,x_{n}]$-module $M=\bigoplus_{\nu \geq 0}M_{\nu}$, consider a minimal free resolution of $M$:
    $$\cdots \rightarrow \oplus_j R(-i-j)^{\beta^{R}_{i,j}(M)}\rightarrow\cdots\rightarrow\oplus_j R(-j)^{\beta^{R}_{0,j}(M)}\rightarrow M\rightarrow 0$$
    where $\beta^R_{i,j}(M):=\dim_k\Tor_i^{R}(M,k)_{i+j}$. We write $\beta^R_{i,j}(M)$
    as $\beta^R_{i,j}$ if it is obvious. We define the regularity of $M$ as follows:
 $$
 \reg_R(M):= \max\{j\mid \beta^R_{i,j}(M)\neq 0 \text{ for some } i\}
 $$
 In particular, $\reg(X):= \reg_R(I_X)$.

\item One says that $M$ satisfies {\it property $\N^R_{d,\alpha}$} if $\beta^R_{i,j}(M)=0$ for all $j\ge d$ and $0\le i\le\alpha$. We can also think of $M$ as a graded $S_t=k[x_t,\ldots,x_{n}]$-module by an inclusion map $S_t \hookrightarrow R$. As a graded $S_t$-module,
    we say that $M$ satisfies {\it property $\N^{S_t}_{d,\alpha}$} if $\beta^{S_t}_{i,j}(M):=\dim_k\Tor_i^{S_t}(M,k)_{i+j}=0$ for all $j\ge d$ and $0\le i\le \alpha$.

\item For an algebraic set $X$ in $\P^{n+e}$, the Green-Lazarsfeld index of $X$, denoted by $\text{index}_{\rm GL}(X)$, is the largest $\alpha\ge 0$ such that $X$ satisfies property $\N_{2,\alpha}$.
 \end{itemize}
\subsection{Elimination Mapping Cone Construction}

For a graded $R$-module $M$, consider the natural multiplicative $S_1=k[x_1,x_2,\ldots,x_{n}]$-module map $\varphi: M(-1) \stackrel{\times x_0}{\longrightarrow} M$ such that $\varphi(m)=x_0\cdot m$ and the induced map on the graded Koszul complex of $M$ over $S_1$:
$$\overline{\varphi}: \Bbb F_{\bullet}=K_{\bullet}^{S_1}(M(-1)) \stackrel{\times x_0}{\longrightarrow} \Bbb G_{\bullet}=K_{\bullet}^{S_1}(M).$$

Then, we have the mapping cone $(C_{\bullet}(\overline{\varphi}),\p_{\,\,\overline{\varphi}})$
such that $C_{\bullet}(\overline{\varphi})=\Bbb G_{\bullet}\bigoplus\Bbb F_{\bullet}[-1],$ and $W=\langle x_1, x_2,\ldots,x_{n} \rangle;$
\begin{itemize}
\item $C_i(\overline{\varphi})_{i+j}=[\Bbb G_{i}]_{i+j}\bigoplus
[\Bbb F_{i-1}]_{i+j}=\left(\wedge^i W\otimes M_j\right)\oplus \left(\wedge^{i-1} W\otimes M_j\right)$.\\
\item the differential $\p_{\,\,\overline{\varphi}}: C_i(\,\,\overline{\varphi})
\rightarrow C_{i-1}(\overline{\varphi})$ is given by
\[\p_{\,\,\overline{\varphi}}=\left(%
\begin{array}{lr}
  \p & \overline{\varphi} \\
  0 & -\p \\
\end{array}%
\right),\] where $\p$ is the differential of Koszul complex $K_{\bullet}^{S_1}(M)$.

\end{itemize}

From the exact sequence of complexes
\begin{equation}
0\longrightarrow \Bbb G_{\bullet} \longrightarrow C_{\bullet}(\overline{\varphi}) \longrightarrow \Bbb F_{\bullet}[-1]\longrightarrow 0
\end{equation}
and the natural isomorphism $H_i(\cone{\bullet})_{i+j}\simeq \Tor_i^R(M,k)_{i+j}$ (cf. Lemma~3.1 in \cite{AK}),
we have the following long exact sequence in homology.
\begin{thm}[Theorem~3.2 in \cite{AK}]\label{Thm:Mapping_cone_theorem}  For a graded $R$-module $M$, there is a long exact sequence:
\begin{equation*}
\begin{array}{ccccccccccccc}
\llra\Tor_{i}^{S_1}(M,k)_{i+j}\llra\Tor_{i}^R(M,k)_{i+j}\llra
\Tor_{i-1}^{S_1}(M,k)_{i-1+j}\llra&  &&
\\[2ex]
\st{\delta=\times x_0}\Tor_{i-1}^{S_1}(M,k)_{i-1+j+1}\llra\Tor_{i-1}^R(M,k)_{i-1+j+1}\llra
\Tor_{i-2}^{S_1}(M,k)_{i-2+j+1}&
\end{array}
\end{equation*}
whose connecting homomorphism $\delta$ is the multiplicative map $\times\, x_0$.
\end{thm}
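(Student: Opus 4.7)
The plan is to combine an Artinian-reduction argument on the coordinate ring of $\Gamma:=X\cap L^\alpha$ with an iterated application of Theorem~\ref{Thm:Mapping_cone_theorem} along the $d:=n+e-\alpha$ linear forms cutting out $L^\alpha$. After a change of coordinates write $L^\alpha=V(x_0,\ldots,x_{d-1})$, set $S_k=k[x_k,\ldots,x_{n+e}]$ (so $S_0=R$ and $S_d$ is the coordinate ring of $L^\alpha$), and let $\bar M=R/(I_X+(x_0,\ldots,x_{d-1}))$ viewed as a graded $S_d$-module. Since $\Gamma$ is finite in $L^\alpha\cong\P^\alpha$, $\bar M$ has Krull dimension $1$; Artinian reduction of $\bar M$ by a general linear form gives a quotient $A$ of $k[y_1,\ldots,y_\alpha]$ with $\dim_kA=\deg(\Gamma)$. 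I will show below that $\bar M$ is $3$-regular over $S_d$; granting this, the Hilbert function of $A$ is concentrated in degrees $0,1,2$ with $h_A(0)=1$ and $h_A(1)=\alpha$ when $\Gamma$ is non-degenerate in $L^\alpha$ (the degenerate case reduces inductively on $\alpha$), so $\deg(\Gamma)=1+\alpha+h_A(2)$. The socle identification for regularity-$2$ Artinian modules gives $h_A(2)=\dim\text{soc}(A)_2=\beta^{S_d/(\ell)}_{\alpha,2}(A)=\beta^{S_d}_{\alpha,2}(\bar M)$, so the problem reduces to bounding the single Betti number $\beta^{S_d}_{\alpha,2}(\bar M)$.

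Apply Theorem~\ref{Thm:Mapping_cone_theorem} $d$ times in succession, once for each $x_{k-1}$: at stage $k$ the long exact sequence connects $\Tor^{S_{k-1}}_i(R/I_X,k)$ and $\Tor^{S_k}_i(R/I_X,k)$ via $\delta_k=\times x_{k-1}$. Feeding in $\beta^R_{i,j}(R/I_X)=0$ for $j\ge 3$ and $0\le i\le\alpha$ (property $\N_{3,\alpha}$) together with $\beta^R_{i,2}(R/I_X)=0$ for $1\le i\le p$ (GL-index $p$), an induction on $k$ propagates the $j\ge 3$ vanishings down to $\beta^{S_d}_{i,j}(R/I_X)=0$ in the corresponding range; passing from $R/I_X$ to $\bar M$ via the Koszul complex of $(x_0,\ldots,x_{d-1})$ then yields the $3$-regularity of $\bar M$ over $S_d$. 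The first bound $\beta^{S_d}_{\alpha,2}(\bar M)\le\beta^R_{\alpha,2}(R/I_X)$ is obtained by tracing contributions through the iterated long exact sequences: only classes originating in $\beta^R_{\alpha,2}(R/I_X)$ can survive all $d$ connecting-map steps into the $(\alpha,2)$-strand over $S_d$. The combinatorial bound $\beta^{S_d}_{\alpha,2}(\bar M)\le(\alpha-p)(\alpha+p+1)/2=\binom{\alpha+1}{2}-\binom{p+1}{2}$ comes from a dimension count at the top of the Koszul complex of $\bar M$ in degree $\alpha+2$, where the $p$ GL-index vanishings carve out a $\binom{p+1}{2}$-dimensional subspace of forced linear syzygy relations to subtract from the naive upper bound $\binom{\alpha+1}{2}$.

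Part~(b) follows by specializing part~(a) to $\alpha=e$ with $L^e$ generic, so $\Gamma=X\cap L^e$ is automatically finite of degree $\deg(X)$: the bound yields $\deg(X)\le 1+e+\binom{e+1}{2}-\binom{p+1}{2}\le\binom{e+2}{2}$. In case $\deg(X)=\binom{e+2}{2}$, every intermediate inequality must be sharp, forcing $p=0$, $h_A(2)=\binom{e+1}{2}$, and $\bar M$ to have a pure $3$-linear resolution; reversing the iterated mapping cones (every connecting $\delta_k$ must be zero in this extremal regime, there being no Tor class available in the linear strand to absorb the shift) lifts the $3$-linear resolution back to $R/I_X$ over $R$, so $X$ is aCM with $\reg X=3$. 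The reverse implication is a direct Hilbert-polynomial computation. The main obstacle is the combinatorial identification in paragraph~2 of the $\binom{p+1}{2}$-dimensional subspace of forced syzygies in the $(\alpha,2)$-Koszul strand: tracking which Koszul classes genuinely survive the iterated connecting maps versus which are automatically killed by the GL-index hypothesis is what sharpens the naive bound $\binom{\alpha+1}{2}$ and distinguishes the $\N_{3,\alpha}$ case from the $\N_{2,p}$ case of Theorem~\ref{Eisenbud et al}.
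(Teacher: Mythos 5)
Your proposal does not address the statement it is attached to. The statement to be proved is Theorem~\ref{Thm:Mapping_cone_theorem}, the elimination mapping cone long exact sequence: for a graded $R$-module $M$ there is a long exact sequence linking $\Tor_i^{S_1}(M,k)$ and $\Tor_i^{R}(M,k)$ whose connecting homomorphism is multiplication by $x_0$. What you have written is instead a proof sketch of the paper's main result, Theorem~\ref{3-regulr-multisecants} (the degree bound for $X\cap L^{\alpha}$ under property $\N_{3,\alpha}$ and the characterization of the extremal case). Your second paragraph even begins ``Apply Theorem~\ref{Thm:Mapping_cone_theorem} $d$ times in succession,'' so you are invoking the very statement you were asked to prove; as a proof of that statement the argument is circular, and in substance it is simply absent. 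Nothing in your write-up --- Artinian reduction of $X\cap L^{\alpha}$, socle dimension counts, the bound $\binom{e+2}{2}$ --- bears on the existence of the long exact sequence or on the identification of its connecting map.

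What the proof actually requires is a short piece of homological algebra, which is how the paper (following \cite{AK}) proceeds. One takes the $S_1$-linear map $\varphi\colon M(-1)\to M$, $m\mapsto x_0m$, the induced map $\overline{\varphi}$ of Koszul complexes $K_{\bullet}^{S_1}(M(-1))\to K_{\bullet}^{S_1}(M)$, and forms the mapping cone $C_{\bullet}(\overline{\varphi})$. The short exact sequence of complexes
\begin{equation*}
0\longrightarrow K_{\bullet}^{S_1}(M)\longrightarrow C_{\bullet}(\overline{\varphi})\longrightarrow K_{\bullet}^{S_1}(M(-1))[-1]\longrightarrow 0
\end{equation*}
yields a long exact sequence in homology whose connecting homomorphism is induced by $\overline{\varphi}$, i.e.\ is $\times\,x_0$; the outer terms compute $\Tor^{S_1}(M,k)$ by definition of Koszul homology, and the remaining ingredient is the graded isomorphism $H_i(C_{\bullet}(\overline{\varphi}))_{i+j}\simeq\Tor_i^{R}(M,k)_{i+j}$ (Lemma~3.1 of \cite{AK}), reflecting that the cone is the Koszul complex of $M$ with respect to the full variable set $x_0,x_1,\ldots,x_n$. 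None of these ingredients appears in your proposal, so it cannot be repaired by local edits; it is a proof attempt for a different theorem.
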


\begin{corollary}\label{projective dimension}
Let $M$ be a finitely generated graded $R$-module and also finitely generated as an $S_1$-module.
Then, $$\pd_{S_1}(M)=\pd_R(M)-1.$$
\end{corollary}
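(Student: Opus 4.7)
The plan is to set $p := \pd_{S_1}(M)$ and derive $\pd_R(M) = p + 1$ directly from the mapping-cone long exact sequence of Theorem~\ref{Thm:Mapping_cone_theorem}. The upper bound $\pd_R(M) \le p+1$ is formal: for every $i \ge p + 2$, both $\Tor_i^{S_1}(M, k)$ and $\Tor_{i-1}^{S_1}(M, k)$ vanish, so the window
\begin{equation*}
0 = \Tor_i^{S_1}(M,k)_{i+j} \longrightarrow \Tor_i^R(M,k)_{i+j} \longrightarrow \Tor_{i-1}^{S_1}(M,k)_{i-1+j} = 0
\end{equation*}
of the long exact sequence forces $\Tor_i^R(M,k)_{i+j} = 0$ for every $j$, and hence $\pd_R(M) \le p + 1$.

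For the lower bound $\pd_R(M) \ge p + 1$ I would take $i = p + 1$ in Theorem~\ref{Thm:Mapping_cone_theorem}; combined with $\Tor_{p+1}^{S_1}(M, k) = 0$, this produces the injection
\begin{equation*}
0 \longrightarrow \Tor_{p+1}^R(M, k) \longrightarrow \Tor_{p}^{S_1}(M, k) \st{\times x_0} \Tor_{p}^{S_1}(M, k).
\end{equation*}
It therefore suffices to exhibit a nonzero element in $\ker(\delta)$ where $\delta = \times x_0$. Since $M$ is finitely generated as a graded $S_1$-module and $S_1$ is a polynomial ring, the Hilbert syzygy theorem gives a finite minimal free resolution whose $p$-th term has finitely many generators, so $\Tor_p^{S_1}(M, k)$ is a nonzero finite-dimensional graded $k$-vector space supported in finitely many internal degrees. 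Letting $d$ denote its top internal degree, the connecting map $\delta$ raises internal degree by one, so the whole piece $\Tor_p^{S_1}(M, k)_d$ is annihilated by $\delta$. Hence $\Tor_{p+1}^R(M, k) \ne 0$ and $\pd_R(M) \ge p + 1$.

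The only non-formal step is this kernel argument, and it is precisely here that the finite generation of $M$ over $S_1$ is used: without it, $\Tor_p^{S_1}(M, k)$ could extend to arbitrarily high internal degrees and the degree-raising map $\times x_0$ could in principle be injective, breaking the lower bound. Everything else is a routine bookkeeping with the long exact sequence already established.
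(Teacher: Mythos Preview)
Your argument is correct and uses the same two ingredients as the paper's proof: the long exact sequence of Theorem~\ref{Thm:Mapping_cone_theorem} and the observation that finite generation of $M$ over $S_1$ forces the relevant $\Tor^{S_1}$ module to live in only finitely many internal degrees, so the degree-raising map $\times x_0$ cannot be injective on it. The only difference is organizational---the paper fixes $\ell=\pd_R(M)$ and deduces $\pd_{S_1}(M)=\ell-1$, whereas you fix $p=\pd_{S_1}(M)$ and deduce $\pd_R(M)=p+1$---but the two arguments are mirror images of each other.
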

\begin{proof}
Let $\ell=\pd_R(M)$. Thus,  $\beta^{R}_{\ell+1,j}(M)=0$ for all $j\ge 1$ and the following map $\delta=\times x_0$ is injective for all $j\ge 1$:
\[0=\Tor_{\ell +1}^{R}(M,k)_{\ell+1+j} \lra \Tor_{\ell}^{S_1}(M,k)_{\ell+j} \st{\delta=\times x_0} \Tor_{\ell}^{S_1}(M,k)_{\ell+j+1}.\]
But, $\Tor_{\ell}^{S_1}(M,k)_{\ell+j+1}=0$ for $j>>0$ due to finiteness of $M$ (as an $S_1$-module). Therefore,  $\Tor_{\ell}^{S_1}(M,k)_{\ell+j}=0$ for all $j\ge 1$.
On the other hand, $\beta^{R}_{\ell,j_{*}}(M)\neq 0$ for some $j_{*}>0$. So,
\[0=\Tor_{\ell}^{S_1}(M,k)_{\ell+j_{*}} \lra \Tor_{\ell}^{R}(M,k)_{\ell+j_{*}}\st{} \Tor_{\ell-1}^{S_1}(M,k)_{\ell-1+j_{*}}\]
is injective and $\beta^{S_1}_{\ell-1,j_{*}}(M)\neq 0$. Consequently, we get
$$\pd_{S_1}(M)=\pd_R(M)-1,$$
as we wished.
\end{proof}

\begin{proposition}\label{prop: N_dp_as_S_module}
Let $M$ be a finitely generated graded $R$-module satisfying property $\N^R_{d,\alpha}, (\alpha\geq 1)$.
If $M$ is also finitely generated as an $S_1$-module, then we have the following:
\begin{itemize}
\item[(a)] $M$ satisfies property $\N^{S_1}_{d,\alpha-1}$. In particular,  $\reg_{S_1}(M)=\reg_{R}(M)$.
\item[(b)] $\beta^{S_1}_{i-1,d-1}(M)\leq \beta^{R}_{i,d-1}(M)$ for $1\le i\le \alpha$.
\end{itemize}
\end{proposition}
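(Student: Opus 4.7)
The strategy is to run everything through the long exact sequence of Theorem~\ref{Thm:Mapping_cone_theorem}. Reindexing (replace $i$ by $i+1$) yields the exact fragment
\[
\Tor_{i+1}^{R}(M,k)_{(i+1)+j}\llra \Tor_{i}^{S_1}(M,k)_{i+j}\st{\times x_0}\Tor_{i}^{S_1}(M,k)_{i+j+1}\llra \Tor_{i}^{R}(M,k)_{i+j+1},
\]
which will serve as the workhorse for both (a) and (b).

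For part (a), fix $i$ with $0\le i\le \alpha-1$. For every $j\ge d$, property $\N^{R}_{d,\alpha}$ kills the leftmost term (since $i+1\le \alpha$ and $j\ge d$), so the map $\times x_0\colon \Tor_{i}^{S_1}(M,k)_{i+j}\to \Tor_{i}^{S_1}(M,k)_{i+j+1}$ is injective in that range. Because $M$ is finitely generated as an $S_1$-module, $\Tor_{i}^{S_1}(M,k)$ is concentrated in bounded degrees, so $\Tor_{i}^{S_1}(M,k)_{i+j}=0$ for $j\gg 0$. A descending induction on $j$ using the injectivity of $\times x_0$ propagates this vanishing down to every $j\ge d$, giving property $\N^{S_1}_{d,\alpha-1}$. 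The inequality $\reg_{S_1}(M)\le \reg_{R}(M)$ comes from the identical descending induction applied for all $i$ with $j>\reg_{R}(M)$. For the reverse inequality, pick any $i$ with $\beta^{R}_{i,j}(M)\ne 0$ at $j=\reg_{R}(M)$; then the LES segment $\Tor_{i}^{S_1}(M,k)_{i+j}\lra \Tor_{i}^{R}(M,k)_{i+j}\lra \Tor_{i-1}^{S_1}(M,k)_{(i-1)+j}$ forces at least one of $\beta^{S_1}_{i,j}(M)$, $\beta^{S_1}_{i-1,j}(M)$ to be nonzero.

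For part (b), apply the workhorse fragment with $j=d-1$ and $1\le i\le \alpha$:
\[
\Tor_{i}^{R}(M,k)_{i+(d-1)}\llra \Tor_{i-1}^{S_1}(M,k)_{(i-1)+(d-1)}\st{\times x_0}\Tor_{i-1}^{S_1}(M,k)_{(i-1)+d}.
\]
Part (a) supplies $\beta^{S_1}_{i-1,d}(M)=0$ (since $0\le i-1\le \alpha-1$), so the rightmost term vanishes, the first arrow becomes surjective, and counting dimensions yields $\beta^{S_1}_{i-1,d-1}(M)\le \beta^{R}_{i,d-1}(M)$.

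The only nontrivial point is the descending induction in (a): the finite generation of $M$ as an $S_1$-module is precisely what supplies the base case, and without it one cannot get the induction off the ground. Everything else is index bookkeeping with the mapping-cone long exact sequence.
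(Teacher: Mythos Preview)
Your proof is correct and follows essentially the same route as the paper: both arguments feed the long exact sequence of Theorem~\ref{Thm:Mapping_cone_theorem} with the vanishing supplied by $\N^{R}_{d,\alpha}$, use finite generation over $S_1$ as the base case for a descending induction on $j$, and then read off~(b) from the surjection forced by $\beta^{S_1}_{i-1,d}(M)=0$. Your treatment of $\reg_{S_1}(M)=\reg_{R}(M)$ is in fact a bit more explicit than the paper's, which only notes the $\alpha=\infty$ case for one inequality; your squeeze argument for the reverse inequality via the segment $\Tor_i^{S_1}\to\Tor_i^{R}\to\Tor_{i-1}^{S_1}$ is a clean way to finish it.
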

\begin{proof}
Suppose that $M$ satisfies  $\N^R_{d,\alpha}, (\alpha\geq 1)$ and let $1\le i\le \alpha$ and $j\geq d$.\\
(a): Consider the exact sequence from Theorem ~\ref{Thm:Mapping_cone_theorem}
\begin{equation}
\begin{array}{ccccccccccccc}
\cdots \lra \Tor_{i}^R(M,k)_{i+j}& \lra
\Tor_{i-1}^{S_1}(M,k)_{i-1+j}&\st{\delta=\times x_0}&&
\\[2ex]
&\Tor_{i-1}^{S_1}(M,k)_{i-1+j+1}&\lra& \Tor_{i-1}^R(M,k)_{i-1+j+1}&\lra & \cdots
\end{array}
\end{equation}
By the property $N^R_{d,\alpha}$, we see that
$\Tor_{i}^R(M,k)_{i+j}=0$. Hence we obtain an isomorphism
\[\Tor_{i-1}^{S_1}(M,k)_{(i-1)+j}\st{\delta=\times x_0}\Tor_{i-1}^{S_1}(M,k)_
{i+j}.\]
By assumption that $M$ is a finitely generated $S_1$-module, we conclude that $\Tor_{i-1}^{S_1}(M,k)_{(i-1)+j}=0$
for $1\le i\le \alpha $ and $j\geq d$. Hence $M$ satisfies $\N^{S_1}_{d,\alpha-1}$. If $\alpha=\infty$, we have that $\reg_{S_1}(M)=\reg_{R}(M)$.\\
(b): Note that we have the following surjection map for $1\le i\le \alpha$
\[\Tor_{i}^{R}(M,k)_{i+d-1} \lra \Tor_{i-1}^{S_1}(M,k)_{i-1+d-1} \st{\delta=\times x_0} \Tor_{i-1}^{S_1}(M,k)_{i-1+d}=0,\]
which is obtained from Theorem~\ref{Thm:Mapping_cone_theorem}. This implies that for $1\le i\le \alpha$
\begin{align*}
\beta^{S_1}_{i-1,d-1}(M)\leq \beta^R_{i,d-1}(M)
\end{align*}
as we wished.
\end{proof}

\begin{remark}\label{Remk:1}
Let $M$ be a finitely generated graded $R$-module satisfying property $\textbf {N}^{R}_{d,\alpha}$ for some $\alpha\geq 1$. If $M$ is also
finitely generated as an $S_t=k[x_{t}, x_{t+1}\ldots,x_n]$-module for every $1\leq t\leq \alpha$ then $M$ satisfies property $\textbf {N}^{S_t}_{d,\alpha-t}$. Moreover, in the strand of $j=d-1$, we have the inequality
\[\beta^{S_{\alpha}}_{0,d-1}~\le \beta^{S_{\alpha-1}}_{1,d-1}~\le \cdots\le \beta^{S_1}_{\alpha-1,d-1}~\le \beta^R_{\alpha,d-1},\]
which follows from Proposition ~\ref{prop: N_dp_as_S_module}~(b).
\end{remark}

The most interesting case is a projective coordinate ring $M=R/I_X$ of an algebraic set $X$. In this case, the elimination mapping cone theorem is
naturally associated to outer projections of $X$. Our starting point is to understand some algebraic and geometric information on $X$ via the relation between $\Tor_i^R(R/I_X,k)$ and $\Tor_i^{S_{\alpha}}(R/I_X,k)$.
\bigskip

Let $X$ is a non-degenerate algebraic set of dimension $n$ in $\P^{n+e}$. Let $\Lambda=\P^{\alpha-1}$ be an $(\alpha-1)$-dimensional linear subspace with homogeneous coordinates $x_0, \ldots, x_{\alpha-1}$, $(\alpha\le e)$ such that $\Lambda\cap X$ is empty.
Then each point $q_i=[0:0:\cdots:1:\cdots:0]$ whose $i$-th coordinates is $1$ is not contained in $X$ for $0\le i \le \alpha-1$.
Therefore, there is a homogeneous polynomial $f_i \in I_X$ of the form $x_i^{m_i}+ g_i$ where $g_i\in R=k[x_0,x_1,\ldots, x_{n+e}]$
is a homogeneous polynomial of degree $m_i$ with the power of $x_i$ less than $m_i$. Therefore, $R/I_X$ is a finitely generated $S_{\alpha}=k[x_{\alpha}, x_{\alpha+1},\ldots, x_{n+e}]$-module with monomial generators
$$\{x_0^{j_0}x_1^{j_1}\ldots x_{\alpha-1}^{j_{\alpha-1}}\mid 0\le j_k < m_k, 0\le k\le {\alpha-1}\}.$$
Note that the above generating set is not minimal. If $X$ satisfies $\textbf {N}^{R}_{d,\alpha}$ then $X$ also satisfies $\textup{\textbf{N}}^{S_{\alpha}}_{d, 0}$. This implies that $R/I_X$ is generated in degree $< d$ as a $S_{\alpha}$-module and thus $\beta^{S_{\alpha}}_{0,i}\le \binom{\alpha-1+i}{i}$ for $0\le i\le d-1$. To sum up, we have the following corollary.

\begin{corollary}\label{N_{d,p-1}as a S-module}
Suppose $X$ satisfies the property ${\textup{\textbf {N}}}^R_{d,\alpha}$ and consider the following minimal free resolution of $R/I_X$ as a graded $S_{\alpha}=k[x_{\alpha},\ldots, x_{n+e}]$-module:
$$\cdots\rightarrow F_{1} \rightarrow F_0\rightarrow R/I_X \rightarrow 0.$$
\begin{itemize}
 \item[(a)] $R/I_X$ satisfies the property $\textup{\textbf{N}}^{S_{\alpha}}_{d, 0}$ as an $S_{\alpha}$-module;
 \item[(b)] The Betti numbers of $F_0$ satisfy the following:
\begin{itemize}
\item[(i)] $\beta^{S_{\alpha}}_{0,0}=1$, $\beta^{S_{\alpha}}_{0,1}=\alpha$, and $\beta^{S_{\alpha}}_{0,i}\le \binom{\alpha-1+i}{i}$ for $2\le i\le d-1$;
\item[(ii)] Furthermore, $\beta^{S_{\alpha}}_{0,d-1}~\le \beta^{S_{\alpha-1}}_{1,d-1}~\le \cdots\le \beta^{S_1}_{\alpha-1,d-1}~\le \beta^R_{\alpha,d-1}$.
\end{itemize}
\item[(c)] When $\alpha=e$, $R/I_X$ is a free $S_{e}$-module if and only if $X$ is arithmetically Cohen-Macaulay.
In this case, letting $d=\reg(X)$, $$R/I_X=S_{e}\oplus S_{e}(-1)^e \oplus \cdots\oplus S_e(-d+1)^{\beta^{S_e}_{0,d-1}}$$
and ${\pi_{\Lambda}}_{*}\mathcal O_X =\mathcal O_{\P^n}\oplus\mathcal O_{\P^n}(-1)^{e}\oplus\cdots\oplus\mathcal O_{\P^n}(-d+1)^{\beta^{S_e}_{0,d-1}}$.
\end{itemize}
\end{corollary}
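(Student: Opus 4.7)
The plan is to leverage the elimination mapping cone machinery already assembled: Proposition~\ref{prop: N_dp_as_S_module}, Remark~\ref{Remk:1}, and Corollary~\ref{projective dimension} applied along the flag of polynomial rings $R \supset S_1 \supset \cdots \supset S_\alpha$. For part (a), I would invoke Remark~\ref{Remk:1} at $t=\alpha$, which by iterating Proposition~\ref{prop: N_dp_as_S_module}(a) reduces $\N^R_{d,\alpha}$ to $\N^{S_\alpha}_{d,0}$. For (b)(i), I would first note that $(R/I_X)_0 = k$ forces $\beta^{S_\alpha}_{0,0}=1$; next, since $X$ is non-degenerate, the images of $x_0,\ldots,x_{\alpha-1}$ in $(R/I_X)_1$ are $k$-linearly independent and lie outside $(S_\alpha)_1\cdot (R/I_X)_0 = \langle x_\alpha,\ldots,x_{n+e}\rangle$, forcing them to appear as $\alpha$ minimal generators, so $\beta^{S_\alpha}_{0,1}=\alpha$. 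For $2\le i \le d-1$, I would simply count the monomials in the (non-minimal) $S_\alpha$-generating set $\{x_0^{j_0}\cdots x_{\alpha-1}^{j_{\alpha-1}} : 0 \le j_k < m_k\}$ exhibited in the preamble, obtaining at most $\binom{\alpha-1+i}{i}$ in degree $i$. Part (b)(ii) is precisely the chain of inequalities displayed in Remark~\ref{Remk:1}.

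For part (c), I would specialize to $\alpha = e$, where $S_e$ has exactly $n+1$ variables and $R/I_X$ has Krull dimension $n+1$. Iterating Corollary~\ref{projective dimension} along the flag gives $\pd_{S_e}(R/I_X) = \pd_R(R/I_X) - e$, so freeness over $S_e$ is equivalent to $\pd_R(R/I_X) = e$. Auslander-Buchsbaum over $R$, together with $\depth(R) = n+e+1$, converts this into $\depth_R(R/I_X) = n+1 = \dim(R/I_X)$, which is exactly the aCM condition. Assuming freeness, the generator degrees of the free module are recorded by $\beta^{S_e}_{0,i}$, and I would invoke the regularity statement in Proposition~\ref{prop: N_dp_as_S_module}(a), iterated through the flag, to obtain $\reg_{S_e}(R/I_X) = \reg_R(R/I_X) = d - 1$, cutting the decomposition off at degree $d-1$. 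Combining with (b)(i) gives the stated decomposition. The sheaf-theoretic version then follows because $\pi_\Lambda: X \to \P^n$ is a finite morphism and ${\pi_\Lambda}_*\mathcal{O}_X$ is the coherent sheaf on $\P^n$ associated to $R/I_X$ viewed as a graded $S_e$-module.

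The main obstacle will be the if-and-only-if of (c): one has to cleanly track the drop in projective dimension through $e$ elimination steps and match it against the Auslander-Buchsbaum bound, in order to confirm that freeness over the subring $S_e$ (which has no linear part coming from $x_0,\ldots,x_{e-1}$) characterizes precisely the aCM algebraic sets. Everything else amounts to bookkeeping on the already-established long exact sequences and the non-minimal monomial generating set.
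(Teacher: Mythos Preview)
Your proposal is correct and follows essentially the same approach as the paper: parts (a) and (b) are deduced from Proposition~\ref{prop: N_dp_as_S_module} and Remark~\ref{Remk:1} together with the non-minimal monomial generating set described just before the corollary, and part (c) is obtained by iterating Corollary~\ref{projective dimension} and Proposition~\ref{prop: N_dp_as_S_module}(a) to get $\pd_{S_e}(R/I_X)=\pd_R(R/I_X)-e$ and $\reg_{S_e}(R/I_X)=\reg_R(R/I_X)$. Your explicit invocation of Auslander--Buchsbaum to equate $\pd_R(R/I_X)=e$ with arithmetic Cohen--Macaulayness, and your remark that ${\pi_\Lambda}_*\mathcal{O}_X$ is the sheafification of $R/I_X$ as an $S_e$-module, simply make explicit what the paper leaves implicit.
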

\begin{proof}
Note that $\binom{\alpha-1+i}{i}$ is the dimension of the vector space of all homogeneous polynomials of degree $i$ in
$k[x_{0},\ldots, x_{\alpha-1}]$ defining $\Lambda=\P^{\alpha-1}$. Since $X$ is non-degenerate, $\{x_i\mid 0\le i\le \alpha-1\}$
is contained in the minimal generating set of $R/I_X$ as an $S_{\alpha}$-module. So, $\beta^{S_{\alpha}}_{0,1}=\alpha$.
The remaining part of $(b)$ is given by Proposition~\ref{prop: N_dp_as_S_module} and the argument given in Remark ~\ref{Remk:1} below.

For a proof of $(c)$,
first note that by Corollary ~\ref{projective dimension} and Corollary ~\ref{prop: N_dp_as_S_module},
\[
\begin{array}{rcl}
 \pd_{S_e}(R/I_X)&=&\pd_R(R/I_X)-e\\
\reg_{S_e}(R/I_X)&=&\reg_{R}(R/I_X).
 \end{array}
\]
Consequently, $R/I_X$ is a free $S_{e}$-module if and only if $\pd_R(R/I_X)=e$, as we wished.
\end{proof}

\begin{remark}\label{Remk:2}
If a reduced algebraic set $X$ is arithmetically Cohen-Macaulay, then it is locally Cohen-Macaulay, equidimensional and connected in codimension one.
Furthermore, as shown in the Corollary ~\ref {N_{d,p-1}as a S-module},
$${\pi_{\Lambda}}_{*}\mathcal O_X =\mathcal O_{\P^n}\oplus\mathcal O_{\P^n}(-1)^{e}\oplus\cdots\oplus\mathcal O_{\P^n}(-d+1)^{\beta^{S_e}_{0,d-1}}.$$

However, in general, if $X$ is locally Cohen-Macaulay and equidimensional, then ${\pi_{\Lambda}}_{*}\mathcal O_X$ is a vector
bundle of rank $r=\deg(X)$. Furthermore, by the well-known splitting criterion due to Horrock or Evans-Griffith(\cite {EvG}, \cite{H}),
${\pi_{\Lambda}}_{*}\mathcal O_X$ is a direct sum of line bundles if and only if $H^i(\P^{n},{\pi_{\Lambda}}_{*}\mathcal O_X(j))=H^i(X, \mathcal O_{X}(j))=0$
for all $1\le i \le n-1, \forall j\in \Z$. This condition is weaker than arithmetically Cohen-Macaulayness.
\end{remark}

\begin{example}[Macaulay 2 \cite{GS}]\label{example:1}
For one's familiarity with these topics, we show the simplest examples in the following table: Let $\Lambda=\P^{i-1}$ be a general linear subspace
with coordinates $x_0, \cdots, x_{i-1}$ and $R/I$ is a $S_i=k[x_i, \cdots, x_{n+e}]$-module. Note that by Corollary~\ref{projective dimension} and Proposition~\ref{prop: N_dp_as_S_module},
$$\pd_{S_i}(R/I_X)=\pd_R(R/I_X)-i \text { and } \reg_{S_i}(R/I_X)=\reg_{R}(R/I_X).$$

{\tiny
\begin{longtable}{l|| c  c  c  c l}
\hline\\
 & {\bf $R$-modules} & {\bf $S_1$-modules} & {\bf $S_2$-modules} \\
\hline\\
 {\tiny\texttt{
 \begin{tabular}{llllll}
 A rational noraml curve $C\subset \P^4$\\
 in generic coordinates \\
 \\
 \end{tabular}
}} & {\tiny\texttt{
 \begin{tabular}{c|ccccccccccccccccccccccccccccc}
           &0 & 1 & 2 & 3 \\[1ex]
        \hline\\
        0 & 1 & 0 &  0 & 0  \\[1ex]
        1 & 0 & 6 &  8 & 3  \\[1ex]
  \end{tabular}
}}  & {\tiny\texttt{
 \begin{tabular}{l|lllll}
           &0 & 1 & 2 &\\[1ex]
        \hline\\
        0 & 1 & 0 & 0\\[1ex]
        1 & 1 & 5 & 3 \\[1ex]
  \end{tabular}
}} & {\tiny\texttt{
 \begin{tabular}{l|lllll}
           &0 & 1 \\[1ex]
        \hline\\
        0 & 1 & 0 \\[1ex]
        1 & 2 & 3  \\[1ex]
  \end{tabular}
}}\\[1ex]

\hline\\
{\tiny\texttt{
 \begin{tabular}{llllll}
A generic complete intersection\\
$S\subset \P^4$ of quadric and cubic\\
 \\
 \end{tabular}
}}
& {\tiny\texttt{
 \begin{tabular}{c|ccccccccccccccccccccccccccccc}
           &0 & 1 & 2\\[1ex]
        \hline\\
        0 & 1 & 0 & 0 \\[1ex]
        1 & 0 & 1 & 0 \\[1ex]
        2 & 0 & 1 & 0 \\[1ex]
        3 & 0 & 0 & 1 \\[1ex]  \end{tabular}
}} & {\tiny\texttt{
 \begin{tabular}{c|ccccccccccccccccccccccccccccc}
           &0 & 1 \\[1ex]
        \hline\\
        0 & 1 & 0 \\[1ex]
        1 & 1 & 0 \\[1ex]
        2 & 0 & 1 \\[1ex]
        3 & 0 & 1 \\[1ex]
  \end{tabular}
}} & {\tiny\texttt{
 \begin{tabular}{c|ccccccccccccccccccccccccccccc}
           &0 \\[1ex]
        \hline\\
        0 & 1 \\[1ex]
        1 & 2 \\[1ex]
        2 & 2 \\[1ex]
        3 & 1 \\[1ex]
  \end{tabular}
}} & \\[1ex]
\hline\\

{\tiny\texttt{
 \begin{tabular}{llllll}
The secant variety of a rational\\
 normal curve ${\rm Sec}(C)\subset \P^5$ in generic\\ coordinates
 \\
 \end{tabular}
}}
 & {\tiny\texttt{
  \begin{tabular}{c|ccccccccccccccccccccccccccccc}
            &0 & 1 & 2 &\\[1ex]
        \hline\\
        0 & 1 & 0 & 0 \\[1ex]
        1 & 0 & 0 & 0 \\[1ex]
        2 & 0 & 4 & 3 \\[1ex]
  \end{tabular}
}}
&
{\tiny\texttt{
 \begin{tabular}{c|ccccccccccccccccccccccccccccc}
            &0 & 1\\[1ex]
        \hline\\
        0 & 1 & 0 \\[1ex]
        1 & 1 & 0 \\[1ex]
        2 & 1 & 3  \\[1ex]
  \end{tabular}
}}
&
{\tiny\texttt{
 \begin{tabular}{c|ccccccccccccccccccccccccccccc}
            &0 \\[1ex]
        \hline\\
        0 & 1 \\[1ex]
        1 & 2 \\[1ex]
        2 & 3  \\[1ex]
  \end{tabular}
}} & \\[1ex]
\hline
\end{longtable}
}

\end{example}
In generic coordinates, Betti table for $R/I$  as a $S_i$-module can be computed with Macaulay 2 (\cite{GS}) as follows:\\

{\small\texttt{
\begin{tabular}{lllllllllll}
 minresS& = (I,i) -> (\\
            &R := ring I; \\
            &n := \# gens R;\\
            &RtoR := map(R,R,random(R\h \{0\}, R\h \{numgens R:-1\}));\\
            &S := (coefficientRing R)[apply(n-i, j -> (gens R)\#(j+i))];\\
            &F := map(R,S);\\
            &use R;\\
            &betti res pushForward(F, coker gens RtoR I)\\
            & );
\end{tabular}
}}

\section{Syzygetic properties of algebraic sets satisfying property $\textbf{N}_{3,e}$}
For an algebraic set $X$ of dimension $n$ in $\P^{n+e}$ satisfying property $\textbf {N}_{2,p}$, it is proved by Eisenbud et al in \cite{EGHP2} that
a finite scheme $X\cap \Lambda$ for any linear space $\Lambda$ of dimension $\le p$, is in general linear position of length at most $\dim\Lambda + 1$.
In addition, they show the syzygetic rigidity, i.e. $X$ satisfies property $\textbf {N}_{2, e}$ if and only if $X$ is $2$-regular.

In this section, we give a proof of Theorem~\ref{3-regulr-multisecants}. This result give us a sharp upper bound on the maximal length of a zero-dimensional linear section of $X$ in terms of graded Betti numbers when $X$ satisfies property $\textbf{N}_{3,p}$. In particular, if $p$ is the codimension $e$ of $X$ then $\deg(X)$ is at most $\binom{e+2}{2}$ and the equality holds if and only if $X$ is an arithmetically Cohen-Maucalay scheme with $3$-linear resolution.

\subsection{The proof of Theorem~\ref{3-regulr-multisecants} (a)}

Let $X$ be a non-degenerate algebraic set in $\P^{n+e}$ of codimension $e\ge 1$
satisfying \textit{property} $\textbf{N}_{3,\alpha}$ with the Green-Lazarsfeld index $p$. For a linear space $L^{\alpha}$ of dimension $\alpha\leq e$, we have to show that if $X\cap L^{\alpha}$ is a finite scheme then
\begin{equation}\label{main_formula}
\deg(X\cap L^{\alpha})\leq 1+ \alpha+\min\left\{\frac{|\alpha-p|(\alpha+p+1)}{2}, \beta^R_{\alpha,2}(R/I_X)\right\}.
\end{equation}

\begin{proof}[The proof of Theorem~\ref{3-regulr-multisecants} (a)] Note that $\beta^R_{\alpha, 2}=0$ if $\alpha\le p$. In this case, the inequality $\eqref{main_formula}$ follows from \cite[Theorem 1.1]{EGHP2} directly.
Now we assume $\alpha > p$ and $\beta^R_{\alpha, 2}\neq 0$.
Suppose $\dim(X\cap L^{\alpha})=0$ and choose a linear subspace $\Lambda\subset L^{\alpha}$ of dimension $(\alpha-1)$
disjoint from $X$ with homogeneous coordinates $x_0,\ldots, x_{\alpha-1}$.

Our main idea is to consider a projection $\pi_\Lambda:X\to \pi_\Lambda(X)\subset \P^{n+e-\alpha}$ and to regard $L^{\alpha}\cap X$
as a fiber of $\pi_\Lambda$ at the point $\pi_\Lambda(L^{\alpha}\setminus \Lambda)\in \pi_\Lambda(X)$.
From Corollary~\ref{N_{d,p-1}as a S-module} (a), we see that $R/I_X$ is finitely generated as an $S_{\alpha}=k[x_{\alpha},x_{\alpha+1}\ldots,x_{n+e}]$-module satisfying property $\N^{S_{\alpha}}_{3,0}$. Thus, the minimal free resolution of $R/I_X$ is of the following form:
\begin{equation}\label{eq:mfr}
\cdots \longrightarrow S_{\alpha}\oplus S_{\alpha}(-1)^{\alpha}\oplus S_{\alpha}(-2)^{\beta^{S_{\alpha}}_{0,2}}\longrightarrow R/I_X\longrightarrow 0.
\end{equation}
Sheafifying the sequence (\ref{eq:mfr}), we have the following surjective morphism
$$
\cdots\rightarrow \mathcal O_{\P^{n+e-\alpha}}\oplus\mathcal O_{\P^{n+e-\alpha}}(-1)^{\alpha}\oplus
\mathcal O_{\P^{n+e-\alpha}}(-2)^{\beta^{S_{\alpha}}_{0,2}}\st{\widetilde{\varphi_{\alpha}}}\pi_{{\Lambda}_{*}}{\mathcal O_X}\longrightarrow 0.
$$
For any point $q\in \pi_{\Lambda}(X)$, note that $\pi_{{\Lambda}_{*}}{\mathcal O_X}\otimes k(q)\simeq
H^0(\langle \Lambda, q\rangle, {\mathcal O}_{{\pi_{\Lambda}}^{-1}(q)})$.
Thus, by tensoring $\mathcal O_{\P^{n+e-\alpha}}(2)\otimes k(q)$ on both sides,
we have the surjection on vector spaces:
\begin{equation}\label{eq:surjection1}
[\mathcal O_{\P^{n+e-\alpha}}(2)\oplus\mathcal O_{\P^{n+e-\alpha}}(1)^{\alpha}\oplus\mathcal O_{\P^{n+e-\alpha}}^{\beta^{S_{\alpha}}_{0,2}}]\otimes k(q)\twoheadrightarrow H^0(\langle \Lambda, q\rangle, {\mathcal O}_{{\pi_{\Lambda}}^{-1}(q)}(2)).
\end{equation}
Therefore, $\langle \Lambda, q\rangle\cap X$ is $3$-regular and the length of any fiber of $\pi_{\Lambda}$ is at most
$1+\alpha+\beta^{S_{\alpha}}_{0,2}$. Hence it is important to get an upper bound of $\beta^{S_{\alpha}}_{0,2}$.\\

{\bf{Claim.}}\label{Claim:Betti numbers inequality} There are following inequalities on graded Betti numbers:
\begin{enumerate}
\item[(i)] $\beta^{S_{\alpha}}_{0,2}~\le \beta^{S_{\alpha-1}}_{1,2}~\le \cdots\le \beta^{S_1}_{\alpha-1,2}~\le \beta^R_{\alpha,2},
~~~~~~ \alpha\le e=\text{codim}(X)$~~;
\item[(ii)] $\beta^{S_{\alpha}}_{0,2}~\le \frac{(\alpha-p)(\alpha+p+1)}{2}$.
\end{enumerate}
Due to the claim, we have the following inequality:
\[\beta^{S_{\alpha}}_{0,2}~\leq \min\{\frac{|\alpha-p|(\alpha+p+1)}{2}, \beta^R_{\alpha,2}(R/I_X)\}.\]
Therefore, the length of any fiber of $\pi_{\Lambda}: X \to \mathbb P^{n+e-\alpha}$ is at most
$$1+\alpha+\beta^{S_{\alpha}}_{0,2}\leq 1+\alpha+\min\{\frac{|\alpha-p|(\alpha+p+1)}{2}, \beta^R_{\alpha,2}(R/I_X)\},$$
which completes a proof of Theorem ~\ref{3-regulr-multisecants}.\\

Now let us prove the Claim. Note that Claim $({\rm i})$ follows directly from Corollary~\ref{N_{d,p-1}as a S-module} (b) for $d=3$.
Hence we only need to show Claim $({\rm ii})$.
We consider the multiplicative maps appearing in the mapping cone sequence as follows:
\begin{equation}\label{eq:long exact sequence of tor}
\begin{array}{cccccccccccccc}
\Tor_{0}^{S_{\alpha}}(R/I_X,k)_{1}\st{\times x_{\alpha-1}}\Tor_{0}^{S_{\alpha}}(R/I_X,k)_{2}\twoheadrightarrow\Tor_{0}^{S_{\alpha-1}}(R/I_X,k)_{2}\lra 0,\\[1ex]
\Tor_{0}^{S_{\alpha-1}}(R/I_X,k)_{1}\st{\times x_{\alpha-2}}\Tor_{0}^{S_{\alpha-1}}(R/I_X,k)_{2}\twoheadrightarrow\Tor_{0}^{S_{\alpha-2}}(R/I_X,k)_{2}\lra 0,\\[1ex]
\cdots~~~\cdots~~~\cdots\\[1ex]
\Tor_{0}^{S_{p+1}}(R/I_X,k)_{1}\st{\times x_{p}}\Tor_{0}^{S_{p+1}}(R/I_X,k)_{2}\twoheadrightarrow\Tor_{0}^{S_{p}}(R/I_X,k)_{2}=0.
\end{array}
\end{equation}
Since $R/I_X$ satisfies property $\textbf {N}^{S_p}_{2,0}$ as an $S_p$-module, we get $\Tor_{0}^{S_{p}}(R/I_X,k)_{2}=0$.
From the above exact sequences, we have the following inequalities on the graded Betti numbers by dimension counting:

\[\beta^{S_{\alpha}}_{0,2}\le \beta^{S_{\alpha}}_{0,1}+\beta^{S_{\alpha-1}}_{0,2}\le \beta^{S_{\alpha}}_{0,1}+\beta^{S_{\alpha-1}}_{0,1}+\beta^{S_{\alpha-2}}_{0,2}\le\cdots\le \beta^{S_{\alpha}}_{0,1}+\beta^{S_{\alpha-1}}_{0,1}+\cdots\beta^{S_{p+1}}_{0,1}= \]
\[\alpha +(\alpha-1)+\cdots + (p+1)=\frac{({\alpha-p})(\alpha+p+1)}{2}. \]
Thus, we obtain the desired inequality
$$\beta^{S_{\alpha}}_{0,2}(R/I_X)\le \min\{\frac{{(\alpha-p)}(\alpha+p+1)}{2}, ~~~\beta^R_{\alpha,2}(R/I_X)\},$$
as we claimed.
\end{proof}

\begin{remark}\label{remk:2}
In the proof of Theorem~\ref{3-regulr-multisecants} (a), we consider the following surjection:
$$[\mathcal O_{\P^{n-\alpha}}(2)\oplus\mathcal O_{\P^{n-\alpha}}(1)^{\alpha}\oplus\mathcal O_{\P^{n-\alpha}}^{\beta^{S_{\alpha}}_{0,2}}]\otimes k(q)
\twoheadrightarrow \,\,\,H^0(\langle \Lambda, q\rangle, {\mathcal O}_{{\pi_{\Lambda}}^{-1}(q)}(2))$$
where $[\mathcal O_{\P^{n-\alpha}}(2)\oplus\mathcal O_{\P^{n-\alpha}}(1)^{\alpha}\oplus\mathcal O_{\P^{n-\alpha}}^{\beta^{S_{\alpha}}_{0,2}}]\otimes k(q)
\subset H^0(\langle \Lambda, q\rangle,\mathcal O_{\langle \Lambda, q\rangle}(2))$.
Thus, $\pi_{\Lambda}^{-1}(q)=X\cap \langle \Lambda, q\rangle$ is $2$-normal and so $3$-regular.
Similarly, we can show that if $X$ satisfies $\N_{d,\alpha}$, then every finite linear section $X\cap L^{\alpha}$ is $d$-regular, which was first proved
by Eisenbud et al\cite[Theorem 1.1]{EGHP2}.
Moreover, from the following surjection as an $S_{\alpha}$-module
$$S_{\alpha}\oplus S_{\alpha}(-1)^{\alpha}\oplus S_{\alpha}(-2)^{\beta^{S_{\alpha}}_{0,2}}\oplus S_{\alpha}(-3)^{\beta^{S_{\alpha}}_{0,3}}\cdots\oplus S_{\alpha}(-d+1)^{\beta^{S_{\alpha}}_{0,d-1}}\rightarrow R/I_X\rightarrow 0,$$
we see that if $X$ satisfies property $\N_{d,\alpha}$ then $\deg(X\cap L^{\alpha})\leq 1+\alpha+ \ds\sum_{t=2}^{d-1} \beta^{S_{\alpha}}_{0,t}.$
\end{remark}

The following result shows that if $X$ is a nondegenerate variety satisfying $\N_{3,\e}$ then there is some sort of rigidity toward the beginning and the end of the resolution. This means the following Betti diagrams are equivalent; 
\begin{center}
  \begin{tabular}{ccccccc}
Property $\N_{3,e}$ and $\beta^R_{e,2}=0$ &  & $X$ is $2$-regular\\[2ex]
{\tiny\texttt{
 \begin{tabular}{c|ccccccccccccccccccccccccccccc}
        0 & 1 & 2 & ... & e-1 & e & e+1 & e+2 & ... \\[1ex]
        \hline\\
        0 & 1 & 0 & ... & 0   & 0 & 0   & 0 & ...\\[1ex]
        1 & 0 & * & ... & *   & * & *   & * & ...\\[1ex]
        2 & 0 & * & ... & *   & 0 & *   & * & ...\\[1ex]
        3 & 0 & 0 & ... & 0   & 0 & *   & * & ...\\[1ex]
        4 & 0 & 0 & ... & 0   & 0 & *   & * & ...\\[1ex]
 \end{tabular}
}}

& $\Longleftrightarrow$
& {\tiny\texttt{
 \begin{tabular}{c|ccccccccccccccccccccccccccccc}
        0 & 1 & 2 & ... & e-1 & e & e+1 & e+2& ... \\[1ex]
        \hline\\
        0 & 1 & 0 & ... & 0   & 0 & 0   & 0  & ... \\[1ex]
        1 & 0 & * & ... & *   & * & *   & *  & ... \\[1ex]
        2 & 0 & 0 & ... & 0   & 0 & 0   & 0  & ... \\[1ex]
        3 & 0 & 0 & ... & 0   & 0 & 0   & 0  & ... \\[1ex]
        4 & 0 & 0 & ... & 0   & 0 & 0   & 0  & ... \\[1ex]
 \end{tabular}
}}\\[2ex]
\end{tabular}
\end{center}

\begin{corollary}\label{prop1312:2-regular}
Suppose $X\subset \P^{n+e}$ is a non-degenerate variety of dimension $n$ and codimension $e$ with property $\N_{3,e}$. Then,
 $\beta^R_{e,2}=0$ if and only if $X$ is 2-regular.
\end{corollary}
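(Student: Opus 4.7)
The reverse implication is immediate: if $X$ is $2$-regular, then $\beta^R_{i,j}(R/I_X)=0$ for every $j\ge 2$, and in particular $\beta^R_{e,2}=0$.

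For the forward implication, the plan is to feed the hypothesis directly into Theorem~\ref{3-regulr-multisecants}(a) with $\alpha=e$. Since $X\subset\P^{n+e}$ is a variety of codimension $e$, we can choose a general linear subspace $\Lambda\simeq\P^{e-1}$ disjoint from $X$ (dimension count $\dim X+(e-1)=n+e-1<n+e$), and then embed it into a general $L^{e}\supset\Lambda$. A Bertini-type genericity argument shows that $X\cap L^{e}$ is a finite scheme of length equal to $\deg(X)$. Applying Theorem~\ref{3-regulr-multisecants}(a) to this $L^{e}$ with the assumption $\beta^R_{e,2}(R/I_X)=0$ gives
\[
\deg(X)=\deg(X\cap L^{e})\;\le\;1+e+\min\Bigl\{\tfrac{|e-p|(e+p+1)}{2},\;\beta^R_{e,2}(R/I_X)\Bigr\}\;=\;e+1.
\]

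On the other hand, since $X$ is a non-degenerate irreducible variety, the classical lower bound of Del Pezzo--Bertini yields $\deg(X)\ge e+1$. Combining the two inequalities forces $\deg(X)=e+1$, i.e.\ $X$ is a variety of minimal degree. By the Eisenbud--Goto theorem (or equivalently the classification of varieties of minimal degree via Del Pezzo--Bertini, recalled in the introduction), any such $X$ is arithmetically Cohen--Macaulay with a $2$-linear resolution, hence $2$-regular. This completes the equivalence.

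The only potentially delicate point in this plan is the Bertini step asserting that a general $L^{e}$ containing the chosen $\Lambda$ meets $X$ in exactly $\deg(X)$ points; this is standard because $\Lambda\cap X=\emptyset$ means the linear system of hyperplanes through $\Lambda$ is base-point free on $X$, so a general complete intersection of $n$ such hyperplanes cuts $X$ transversally in a reduced $0$-dimensional scheme of the expected degree. Once this is in hand, no further computation is needed: the whole content of the forward direction is the single numerical inequality $\deg(X)\le e+1$ extracted from Theorem~\ref{3-regulr-multisecants}(a).
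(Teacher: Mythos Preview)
Your argument is correct, and it shares the same opening move as the paper: apply Theorem~\ref{3-regulr-multisecants}(a) with $\alpha=e$ and $\beta^R_{e,2}=0$ to get $\deg(X\cap L^e)\le e+1$ for a linear space $L^e$ meeting $X$ finitely. The difference is in how you finish. You apply the bound only to a \emph{general} $L^e$, obtain $\deg(X)\le e+1$, pair this with the classical lower bound $\deg(X)\ge e+1$ for a non-degenerate irreducible variety, and conclude that $X$ has minimal degree, hence is $2$-regular. The paper instead applies the bound to \emph{every} $L^e$ with finite intersection, deduces that $X$ is \emph{small} in the sense of \cite{EGHP1}, and then quotes \cite[Theorem~0.4]{EGHP1} to get $2$-regularity.

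Your route is a bit more self-contained: it needs only the elementary degree inequality and the minimal-degree classification, both of which are already recalled in the introduction, and it avoids having to argue that the bound on $e$-dimensional sections propagates to smallness for all lower-dimensional linear sections. The paper's route, on the other hand, makes explicit the geometric content (smallness) that underlies the numerical coincidence, and is closer in spirit to the $\mathbf N_{2,p}$ story of \cite{EGHP1,EGHP2}. Either way, the substantive input is exactly Theorem~\ref{3-regulr-multisecants}(a); the rest is bookkeeping.
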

\begin{proof}
Let $L^e$ be a linear space of dimension $e$ and assume that $X\cap L^{e}$ is finite. By Theorem~\ref{3-regulr-multisecants}~(a),
$
\deg(X\cap L^e)\leq 1+e+\beta^R_{e,2}.
$
Therefore, $\beta^R_{e,2}=0$ implies $\deg(X\cap L^e)\leq 1+e$. Since $X$ is a nondegenerate variety this implies that $X$ is small.
(i.e. for every zero-dimensional intersection of $X$ with a linear space $L$, the degree of $X\cap L$ is at most $1+\dim(L)$. (See \cite[Definition~11]{E}.) Then it follows directly from \cite[Theorem~0.4]{EGHP1} that $X$ is $2$-regular. 
\end{proof}

\begin{remark}
What can we say about the case $\beta^R_{\alpha,2}=0$ where $\alpha<e$? In this case, we see that if $\Lambda\cap X$ is finite for a linear subspace $\Lambda$ of dimension $\leq \alpha$ then $\deg(\Lambda\cap X)\leq \dim\Lambda+1$. Note that this condition is a necessary condition for property $\N_{2,\alpha}$. However, the converse is false in general, as for example in the case of a double structure on a line in $\P^3$ or the case of the plane with embedded point. (See  \cite[Example~1.4]{EGHP2}.) We do not know if there are other cases when $X$ is a variety.
\end{remark}

\begin{example}[Macaulay2 \cite{GS}]
(a) The two skew lines $X$ in $\P^3$ satisfies $\deg(X)=2<1+e=3$. The Betti table of $R/I_X$ is given by
\[{\tiny\texttt{
 \begin{tabular}{c|ccccccccccccccccccccccccccccc}
             & 0 & 1 & 2 & 3  & 4 & ... \\[1ex]
        \hline\\
        0 & 1 & 0 & 0  & 0 & 0&... \\[1ex]
        1 & 0 & 4 & 4  & 1 & 0&... \\[1ex]
        2 & 0 & 0 & 0  & 0 & 0&... \\[1ex]
         \end{tabular}
}}\]
Note that $X$ is $2$-regular but not aCM.\\
(b) Let $C$ be a rational normal curve in $\P^4$, which is $2$-regular. If $X=C\cup P$ for a general point $P\in \P^4$ then $\deg(X)=1+e=4$. However a general hyperplane $L$ passing through $P$ is $5$-secant $3$-plane such that $\deg(L\cap X)=5>4=1+e$. This implies that $\beta^R_{e,2}(R/I_X)\neq 0$.
If $P\in \Sec(C)$ then there is a $3$-secant line to $X$. Therefore $\beta^R_{1,2}(R/I_X)\neq 0$. For the two cases, the corresponding Betti tables for $X$ are computed as follows (\cite[Macaulay 2]{GS}):
\begin{center}
\begin{tabular}{ccccccc}

{\tiny\texttt{
 \begin{tabular}{c|ccccccccccccccccccccccccccccc}
             &0 & 1 & 2 & 3 & 4 & 5 & ... \\[1ex]
        \hline\\
        0 & 1 & 0 & 0 & 0   & 0 & 0   & ... \\[1ex]
        1 & 0 & 5 & 5 & 0   & 0  & 0   &... \\[1ex]
        2 & 0 & 1 & 3 & 4   & 1 & 0   & ... \\[1ex]
        3 & 0 & 0 & 0& 0   & 0 & 0   & ... \\[1ex]
  \end{tabular}
}}

& \quad
& {\tiny\texttt{
 \begin{tabular}{c|ccccccccccccccccccccccccccccc}
             &0 & 1 & 2 & 3 & 4 & 5 & ... \\[1ex]
        \hline\\
        0 & 1 & 0 & 0 & 0   & 0 & 0   & ... \\[1ex]
        1 & 0 & 5 & 4 & 0   & 0  & 0   &... \\[1ex]
        2 & 0 & 0 & 3 & 4   & 1 & 0   & ... \\[1ex]
        3 & 0 & 0 & 0& 0   & 0 & 0   & ... \\[1ex]
  \end{tabular}
}} \\
&&&
\\

Case 1: $P\in \Sec(C)$ &  & Case 2: $P\notin \Sec(C)$
\end{tabular}
\end{center}
\end{example}

\begin{example}[F.-O. Schreyer]\label{example:2014} 
Let $C$ be a rational normal curve and $Z$ be a set of general $4$ points  in $\P^3$. \\

{\small\texttt{
\begin{tabular}{lrllllll}
 i1 : &R&=&QQ[x\_0..x\_3];  \\
 &C&=&minors(2,matrix\{\{x\_0,x\_1,x\_2\},\{x\_1,x\_2,x\_3\}\}); -- a rational normal curve\\
 &Z&=&minors(2,random(R\h2,R\h\{4:-1\})); -- general 4 points\\
 &X&=&intersect(C,Z);\\
\end{tabular}
}}\\

Using Macaulay 2, we can compute the Betti table of $X=C\cup Z$ as follows:\\

{\small\texttt{
\begin{tabular}{llllllll}
 i5 & : & betti res X  \\[1ex] 
\end{tabular}
}}\\

{\small\texttt{
\begin{tabular}{llrlllll}
      &     &  &       & 0 & 1 & 2 & 3 \\[1ex]     
o5 &=& total &:& 1 & 6 & 6 & 1 \\[1ex]
      &&            0 &:& 1 & . & .  & . \\[1ex]
      &&             1 &:& . & . & .  & . \\[1ex]
       &&            2 &:& . & 6 & 6  & .\\[1ex]
        &&           3 &:& . & . & .  & 1\\[1ex] 
\end{tabular}
}}\\

Since the codimension $e$ of $X$ is two, $X$ satisfies property $\N_{3,e}$. Note that $X$ is not $3$-regular. Unlike the case of $\N_{2,e}$, the condition $\N_{3,e}$ in general does not imply $3$-regularity.
\end{example}

\subsection{The proof of Theorem~\ref{3-regulr-multisecants} (b)} 
Suppose that $X$ satisfies property $\N_{3,e}$ and let $L^e\subset\P^n$ be a linear space of dimension $e$. If $X\cap L^{e}$ is finite then we have the following inequality from Theorem~\ref{3-regulr-multisecants}~(a) as follows;
\begin{equation}\label{N3e_inequality}
\deg(X\cap L^e)\leq 1+e+\frac{(e-p)(e+p+1)}{2}\leq 1+e+\binom{e+1}{2}=\binom{e+2}{2}.
\end{equation}
This implies that $\deg(X)\leq \binom{2+e}{2}$ since $\deg(X)$ is defined by $\deg(X\cap L^e)$ for a {\it general} linear space $L^e$ of dimension $e$. 

 The bound in \eqref{N3e_inequality} is sharp because if $M$ is a $1$-generic matrix of size $3\times t$  for $t\geq 3$ then the determinantal variety $X$ defined by maximal minors of $M$  acheives this degree bound. In this case, the minimal free resolution of $I_X$ is a $3$-linear resolution, which is given by Eagon-Northcott complex. 

 Note that if $X$ is arithemetically Cohen-Macaulay and $I_X$ has $3$-linear resolution then it was shown that $\deg(X)=\binom{e+2}{2}$ in  \cite[Corollary~1.1]{EG}. The converse is not true in general. For example, let $Y$ be the secant variety of a rational normal curve in $\mathbb P^n$ and let $P$ be a general point in $\mathbb P^n$. Then the algebraic set $X=Y\cup P$ has the geometric degree $\binom{e+2}{2}$ but it does not satisfy $N_{3,e}$ because there exists a $(\binom{e+2}{2}+1)$-secant $e$-plane to $X$. This also implies that $I_X$ does not have $3$-linear resolution.
 
 It is natural to ask what makes the ideal $I_X$  have $3$-linear resolution under the condition $\deg(X)=\binom{e+2}{2}$.  Theorem~\ref{3-regulr-multisecants} (b) shows that property $\N_{3,e}$ is sufficient for this.
  
\begin{remark} Note that the condition $\N_{3,e}$ is essential and cannot be weakened. For example, let $S$ be a smooth complete intersection surface of type $(2,3)$ in $\P^4$. Then the codimension $e$ is two such that $\deg(S)=6=\binom{e+2}{2}$. However $I_X$ does not have $3$-linear resolution. Note that $S$ satisfies $\N_{3,e-1}$ but not $\N_{3,e}$.
\end{remark}

For proof we need the following lemma.
\begin{lemma}\label{cor:303}
Suppose that $X$ satisfies \textit{property} ${\rm {\bf N}}_{3,e}$ and $\deg(X)= \binom{e+2}{2}$.  Then,
\begin{itemize}
 \item[(a)] $I_X$ has no quadric generators. This implies that $I_X$ is $3$-linear up to $e$-th step.
 \item[(b)] $\binom{\alpha+1}{2}\le \beta^R_{\alpha,2}(R/I_X)$ for all $1\le {\alpha}\le e$.
\end{itemize}
\end{lemma}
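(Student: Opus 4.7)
The plan is to use the $\alpha = e$ case of Theorem \ref{3-regulr-multisecants}(a) together with the universal bound $\beta^{S_e}_{0,2} \le \binom{e+1}{2}$ from Corollary \ref{N_{d,p-1}as a S-module}(b)(i) to force equality $\beta^{S_e}_{0,2} = \binom{e+1}{2}$, and then to translate this into the statement that $I_X$ has no quadric generators. The inequality in part (b) then drops out of the same dimension count applied to $S_\alpha$ for each $1 \le \alpha \le e$.

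For part (a), I would choose a general $e$-plane $L^e \subset \P^{n+e}$ so that $L^e \cap X$ is finite of length $\deg(X) = \binom{e+2}{2}$, and within $L^e$ choose a generic hyperplane $\Lambda$ disjoint from $X$; fix coordinates so that $\Lambda$ is cut out by $x_e = \cdots = x_{n+e} = 0$ and set $S_e = k[x_e, \ldots, x_{n+e}]$. By the proof of Theorem \ref{3-regulr-multisecants}(a),
$$\binom{e+2}{2} = \deg(L^e \cap X) \le 1 + e + \beta^{S_e}_{0,2},$$
and combined with $\beta^{S_e}_{0,2} \le \binom{e+1}{2}$ from Corollary \ref{N_{d,p-1}as a S-module}(b)(i), this forces $\beta^{S_e}_{0,2} = \binom{e+1}{2}$. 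Now let $U = \langle x_ix_j : 0\le i\le j< e\rangle_k$ and $W = \langle x_ix_j : \max(i,j)\ge e\rangle_k$, so $R_2 = U \oplus W$; a direct check shows that $W$ is precisely the degree-$2$ piece of $(x_e, \ldots, x_{n+e})\cdot (R/I_X)$ pulled back to $R_2$, whence
$$\beta^{S_e}_{0,2} = \dim_k\bigl(R_2/(W + I_{X,2})\bigr) = \binom{e+1}{2} - \dim_k \pi_U(I_{X,2}),$$
where $\pi_U: R_2 \to U$ is the projection along $W$. The equality above therefore forces $\pi_U(I_{X,2}) = 0$. But under the identification $U \cong \mathrm{Sym}^2\Lambda^*$, one has $\pi_U(q) = q|_\Lambda$, and for any fixed nonzero $q\in R_2$ the condition $\Lambda\not\subset V(q)$ is open and nonempty in the relevant Grassmannian; so for generic $\Lambda$, if $I_{X,2}\ne 0$ then $\pi_U(I_{X,2})\ne 0$. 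Hence $I_{X,2}=0$, i.e.\ $I_X$ has no quadric generators. Combined with property $\N_{3,e}$ and the observation that an ideal generated purely in degree $3$ has its minimal $i$-th syzygies in the $3$-linear strand, the resolution of $I_X$ is $3$-linear up to the $e$-th step.

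For part (b), $I_{X,2} = 0$ from part (a) gives $(R/I_X)_2 = R_2$. Fix $1\le\alpha\le e$. Since $\dim X + (\alpha-1) < n + e$, a generic $(\alpha-1)$-plane is disjoint from $X$; fix coordinates so that this plane is cut out by $x_\alpha = \cdots = x_{n+e} = 0$ and set $S_\alpha = k[x_\alpha,\ldots, x_{n+e}]$. The same dimension count as in part (a) yields
$$\beta^{S_\alpha}_{0,2} = \dim_k\bigl(R_2/\langle x_ix_j:\max(i,j)\ge\alpha\rangle_k\bigr) = \binom{\alpha+1}{2},$$
and the chain of inequalities from Corollary \ref{N_{d,p-1}as a S-module}(b)(ii) with $d=3$ then yields
$$\binom{\alpha+1}{2} = \beta^{S_\alpha}_{0,2} \le \beta^{S_{\alpha-1}}_{1,2}\le\cdots\le\beta^R_{\alpha,2}(R/I_X),$$
which is the desired inequality.

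The only delicate step is the genericity argument in part (a): we need a generic $\Lambda$ to escape at least one nonzero quadric of $I_X$, which is immediate from $V(q) \subsetneq \P^{n+e}$ and openness of the complement. All remaining steps are straightforward dimension counts exploiting the elimination mapping cone inequalities established in Section~2.
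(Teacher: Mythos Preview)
Your proof is correct, and for part (b) it is essentially the paper's argument. For part (a), however, you take a genuinely different route.

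The paper argues (a) by contradiction: assuming a quadric $Q\supset X$ exists, it chooses a point $q\in L^e\setminus Q$ and invokes \cite[Theorem~4.2]{AKS} to obtain a surjection $S_1\oplus S_1(-1)\twoheadrightarrow R/I_X$, which forces $\beta^{S_1}_{0,2}=0$. Feeding this into the telescoping inequalities
\[
\beta^{S_e}_{0,2}\le \beta^{S_e}_{0,1}+\cdots+\beta^{S_2}_{0,1}+\beta^{S_1}_{0,2}=e+(e-1)+\cdots+2+0=\binom{e+1}{2}-1
\]
yields $\deg(X\cap L^e)\le\binom{e+2}{2}-1$, a contradiction. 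By contrast, you first squeeze $\beta^{S_e}_{0,2}=\binom{e+1}{2}$ from the degree hypothesis and the universal bound of Corollary~\ref{N_{d,p-1}as a S-module}(b)(i), then identify $\beta^{S_e}_{0,2}$ explicitly with $\binom{e+1}{2}-\dim_k\pi_U(I_{X,2})$ via the splitting $R_2=U\oplus W$, and finally use genericity of $\Lambda$ to conclude $I_{X,2}=0$. Your argument is more self-contained in that it avoids the external reference to \cite{AKS} and works entirely with the linear-algebra of the degree-$2$ piece; the paper's argument, on the other hand, makes the obstruction more structural by locating it at the very first projection step ($\beta^{S_1}_{0,2}=0$), which dovetails with the mapping-cone philosophy developed in Section~2. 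One small point of presentation: since the vanishing $\pi_U(I_{X,2})=0$ holds for \emph{every} admissible $\Lambda$, the cleanest phrasing is to suppose $I_{X,2}\ne 0$ at the outset and then build ``$\Lambda\not\subset V(q)$'' into the generic choice, rather than invoking genericity after the fact.
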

\begin{proof}
Assume that $\deg(X)=\binom{e+2}{2}$ and there is a quadric hypersurface $Q$ containing $X$. Choose a general linear
subspace $L^e$ of dimension $e$ such that $L^e\varsubsetneq Q$. Then, we may assume that the point $q=(1,0,\cdots,0)$ is contained in $L^e\setminus Q$, and thus
we have a surjective morphism $S_1\oplus S_1(-1)\twoheadrightarrow R/I_X$ (see the proof in \cite[Theorem~4.2]{AKS}). Hence,
the multiplicative map $$\Tor_{0}^{S_{1}}(R/I_X,k)_{1}\st{\times x_{0}}\Tor_{0}^{S_{1}}(R/I_X,k)_{2}$$
is a zero map and $\Tor_{0}^{S_{1}}(R/I_X,k)_{2}=0$. Therefore, as in the proof of Theorem~\ref{3-regulr-multisecants} (a)
(see the sequence (\ref{eq:long exact sequence of tor}) for $\alpha=e$ and $p=0$),
$$
\beta^{S_{\e}}_{0,2}\le \beta^{S_{e}}_{0,1}+\beta^{S_{e-1}}_{0,1}+\cdots\beta^{S_{2}}_{0,1}+\beta^{S_{1}}_{0,2}
=e +(e-1)+\cdots + 2+0= \binom{e+1}{2}-1.
$$
Thus, $\deg(X\cap L^e)\le 1+e+\beta^{S_{\e}}_{0,2}\leq \binom{e+2}{2}-1$ which contradicts our assumption.
So, there is no quadric vanishing on $X$ and the minimal free resolution of $I_X$ is $3$-linear up to $e$-th step. In addition, in the case of $3$-linearity up to $e$-th step, there is no syzygies in degree $2$ and
$$
\beta^{S_{\alpha}}_{0,2}=\beta^{S_{\alpha}}_{0,1}+\beta^{S_{\alpha-1}}_{0,1}+\cdots\beta^{S_{2}}_{0,1}+\beta^{S_{1}}_{0,1}
=\binom{\alpha+1}{2}\le \beta^R_{\alpha,2}(R/I_X),
$$
as we wished.
\end{proof}
To prove Theorem~\ref{3-regulr-multisecants} (b), it suffices to show that $\deg(X)=\binom{e+2}{2}$ implies $I_X$ has a $3$-linear resolution under the condition $\N_{3,e}$ (\cite[Corollary~1.11]{EG}). Our proof is divided into four steps.
\begin{proof}[The proof of Theorem~\ref{3-regulr-multisecants} (b):]

Step {\it I}. First we show that if $H$ is a general linear space of dimension $i$ where $e\leq i\leq n$, then $I_{{X\cap H},{H}}$ cannot have quadric generators.

For general linear space $\Lambda$ of dimension $e$, we see from Remark~\ref{remk:2} that $I_{{X\cap \Lambda},{\Lambda}}$ is $3$-regular. Since $X\cap \Lambda$ is a zero dimensional scheme of
$$\deg(X\cap \Lambda)=\deg(X)=\binom{e+2}{2}=\binom{\codim(X\cap \Lambda,\Lambda)+2}{2},$$
it follows from Lemma~\ref{cor:303} that $I_{{X\cap \Lambda},{\Lambda}}$ has a 3-linear resolution and hence there is no quadric generator in the ideal $I_{{X\cap \Lambda},{\Lambda}}$. This implies that if $H$ is a general linear space of dimension $i$ for some $e\leq i\leq n$, then $I_{{X\cap H},{H}}$ cannot have quadric generators. In particular, if $H=\P^n$ then $I_X$ does not have quadric generators and hence
$$\beta_{k,1}(R/I_X)=0 \text{ for all } k\geq 0.$$
\[
{\begin{tabular}{ccccccc}
{\tiny\texttt{
 \begin{tabular}{c|ccccccccccccccccccccccccccccc}
        & 0 & 1 & ... & e-1 & e & e+1 & e+2 & ... \\[1ex]
        \hline\\
        0 & 1 & 0 & ... & 0   & 0 & 0   & 0&... \\[1ex]
        1 & 0 & * & ... & *   & * & *   & *&... \\[1ex]
        2 & 0 & * & ... & *   & * & *   & *&... \\[1ex]
        3 & 0 & 0 & ... & 0   & 0 & *   & *&... \\[1ex]
 \end{tabular}
}}
&$\Longrightarrow$ &
{\tiny\texttt{
 \begin{tabular}{c|ccccccccccccccccccccccccccccc}
        & 0 & 1 & ... & e-1 & e & e+1 & e+2 &... \\[1ex]
        \hline\\
        0 & 1 & 0 & ... & 0   & 0 & 0   &0& ... \\[1ex]
        1 & 0 & 0 & ... & 0   & 0 & 0   &0& ... \\[1ex]
        2 & 0 & * & ... & *   & * & *   &*& ... \\[1ex]
        3 & 0 & 0 & ... & 0   & 0 & *   &*& ... \\[1ex]
 \end{tabular}
}}\\
&&&
\end{tabular}
}
\]
Step {\it II}. The goal in this step is to show that
$$\beta_{k,3}(I_X)=\beta_{k+1,2}(R/I_X)=0 \text{ for all } k\geq e.$$
\[
\begin{tabular}{ccccccc}
{\tiny\texttt{
 \begin{tabular}{c|ccccccccccccccccccccccccccccc}
       & 0 & 1 & ... & e-1 & e & e+1 & e+2 &... \\[1ex]
        \hline\\
        0 & 1 & 0 & ... & 0   & 0 & 0   &0& ... \\[1ex]
        1 & 0 & 0 & ... & 0   & 0 & 0   &0& ... \\[1ex]
        2 & 0 & * & ... & *   & * & *   &*& ... \\[1ex]
        3 & 0 & 0 & ... & 0   & 0 & *   &*& ... \\[1ex]
 \end{tabular}
}}
$\Longrightarrow$ &
{\tiny\texttt{
 \begin{tabular}{c|ccccccccccccccccccccccccccccc}
        & 0 & 1 & ... & e-1 & e & e+1 & e+2 &... \\[1ex]
        \hline\\
        0 & 1 & 0 & ... & 0   & 0 & 0   &0& ... \\[1ex]
        1 & 0 & 0 & ... & 0   & 0 & 0   &0& ... \\[1ex]
        2 & 0 & * & ... & *   & * & 0   &0& ... \\[1ex]
        3 & 0 & 0 & ... & 0   & 0 & *   &*& ... \\[1ex]
 \end{tabular}
}}\\
\end{tabular}
\]
To show this, we prove that if $k\geq e$ then $\beta_{k,3}(\gin I_X)=0$, where $\gin (I_X)$ is a generic initial ideal of $I_X$ with respect to the reverse lexicographic monomial order. Note that $\beta_{k,3}(\gin (I_X))=0$ implies that $\beta_{k,3}(I_X)=0$ (\cite[Proposition~2.28]{G}).
Let $\mathcal G(\gin(I_X))_d$ be the set of {\it monomial generators} of $\gin(I_X)$ in degree $d$. For each monomial $T$ in $R=k[x_0,\ldots, x_n]$, we denote by $m(T)$ 
$$\max\{ i \geq 0 \mid \text{ a variable } x_i \text{ divides } T\}.$$
Now suppose that
\begin{equation}\label{eq: betti and gin}
\beta_{k,3}(\gin(I_X))\neq 0 \text{ for some } k \geq e,
\end{equation}
and let $k$ be the largest integer satisfying the condition \eqref{eq: betti and gin}. By the result of Eliahou-Kervaire \cite{EK} we see that
$$\beta_{k,3}(\gin(I_X))=\big|\{\,T\in \mathcal G(\gin(I_X))_3 \mid m(T)=k \}\big|.$$
Since $\beta_{k,3}(\gin(I_X))\neq 0$, we can choose a monomial $T\in \mathcal G(\gin(I_X))_3$  such that $m(T)=k$. This implies that $T$ is divided by $x_k$. If $H$ is a general linear space of dimension $k$ then it follows from \cite[Theorem~2.30]{G} that
the ideal
\begin{equation}\label{eq:1312}
\gin(I_{{X\cap H},{H}})=\left[\frac{(\gin(I_X),x_{k+1},\ldots, x_n)}{(x_{k+1},\ldots, x_n)}\right]^{\rm sat}=\left[\frac{(\gin(I_X),x_{k+1},\ldots, x_n)}{(x_{k+1},\ldots, x_n)}\right]_{x_k\to 1}
\end{equation}
has to contain the quadratic monomial $T/x_k$. This means that $X\cap H$ is cut out by a quadric hypersurface, which contradicts the result in Step {\it I}. Hence we conclude that $\beta_{k,3}(I_X)=0$ for all $k\geq e$.\\

Step {\it III}.
We claim that
\begin{equation}
\mathcal G(\gin(I_X))_3=\gin(I_X)_3=k[x_0,\ldots,x_{e-1}]_3.
\end{equation}

By Lemma~\ref{cor:303} and \cite[Proposition~2.28]{G}, we see that
\begin{equation}\label{gin and reduction}
\binom{e+1}{2}\leq \beta_{e,2}(R/I_X) = \beta_{e-1,3}(I_X)\leq \beta_{e-1,3}(\gin(I_X)).
\end{equation}
Since $\beta_{k,3}(\gin(I_X))=0$ for each $k\geq e$, any monomial generator $T\in \mathcal G(\gin(I_X))_3$ cannot be divided by $x_k$ for any $k\geq e$.  Thanks to the result of Eliahou-Kervaire \cite{EK} again,
$$
\begin{array}{llllllllllllllll}
\beta_{e-1,3}(\gin(I_X)) & = &\big|\{ T\in \mathcal G(\gin(I_X))_3 \mid m(T)=e-1 \}\big| \\[1ex]
                         & \leq & \dim_k \big( x_{e-1}\cdot k[x_0,\ldots, x_{e-1}]_2\big)\\[1ex]
                         & = & \ds \binom{e+1}{2}.
\end{array}
$$
By the dimension counting and equation~\eqref{gin and reduction}, we have
$\beta_{e-1,3}(\gin(I_X))= \binom{e+1}{2}$
and thus
\[\{ T\in \mathcal G(\gin(I_X))_3 \mid m(T)=e-1 \}= x_{e-1}\cdot k[x_0,\ldots, x_{e-1}]_2,\]
which implies that $x_{e-1}^3\in \gin(I_X)$. Note that $\gin(I_X)$ does not have any quadratic monomial. Hence we conclude from Borel fixed property of $\gin(I_X)$ that
\begin{equation}
\mathcal G(\gin(I_X))_3=\gin(I_X)_3=k[x_0,\ldots,x_{e-1}]_3.
\end{equation}

\noindent Step {\it IV}. Finally, by the result in Step {\it II}, we only need to show that, for all $k\geq e$ and $j\geq 3$,
$$\beta_{k,j}(I_X)=0.$$
\[
\begin{tabular}{ccccccc}
{\tiny\texttt{
 \begin{tabular}{c|ccccccccccccccccccccccccccccc}
       & 0 & 1 & ... & e-1 & e & e+1 & e+2 &... \\[1ex]
        \hline\\
        0 & 1 & 0 & ... & 0   & 0 & 0   &0& ... \\[1ex]
        1 & 0 & 0 & ... & 0   & 0 & 0   &0& ... \\[1ex]
        2 & 0 & * & ... & *   & * & 0   &0& ... \\[1ex]
        3 & 0 & 0 & ... & 0   & 0 & *   &*& ... \\[1ex]
        4 & 0 & 0 & ... & 0   & 0 & *   &*& ... \\[1ex]
 \end{tabular}
}}
$\Longrightarrow$ &
{\tiny\texttt{
 \begin{tabular}{c|ccccccccccccccccccccccccccccc}
        & 0 & 1 & ... & e-1 & e & e+1 & e+2 &... \\[1ex]
        \hline\\
        0 & 1 & 0 & ... & 0   & 0 & 0   &0& ... \\[1ex]
        1 & 0 & 0 & ... & 0   & 0 & 0   &0& ... \\[1ex]
        2 & 0 & * & ... & *   & * & 0   &0& ... \\[1ex]
        3 & 0 & 0 & ... & 0   & 0 & 0   &0& ... \\[1ex]
        4 & 0 & 0 & ... & 0   & 0 & 0   &0& ... \\[1ex]
 \end{tabular}
}}\\
\end{tabular}
\]
Since $\beta_{k,j}(I_X)\leq \beta_{k,j}(\gin(I_X))$, it is suffices to prove that $\gin(I_X)$ has no generators in degree $\geq 4$. To prove this, suppose that there is a monomial generator $T\in \mathcal G(\gin(I_X))_j$ for some $j\geq 4$. Then the monomial $T$ can be written as a product of two monomial $N_1$ and $N_2$ such that
$$N_1\in k[x_{e},\ldots, x_n], \quad N_2\in k[x_0, \ldots, x_{e-1}].$$
By the result in Step {\it III}, if the monomial $N_2$ is divided by some cubic monomial in $k[x_0, \ldots, x_{e-1}]$ then $T$ cannot be a monomial generator of $\gin(I_X)$. Hence we see $\deg(N_2)$ is at most $2$. If $\Lambda$ is a general linear space  of dimension $e$ then it follows from the similar argument given in the proof of Step {\it III} with the equation \eqref{eq:1312} that $N_2\in \gin(I_{{X\cap \Lambda},{\Lambda}})$. Hence $I_{{X\cap \Lambda},{\Lambda}}$ has a hyperplane or a quadratic polynomial, which contradicts the result proved in Step {\it I}. 
\end{proof}

\begin{remark}
The similar argument in the proof can also be applied to show Theorem~\ref{Eisenbud et al} (b).
\end{remark}

\begin{example}
In \cite{HK}, the authors have shown that if a non-degenerate reduced scheme $X\subset \P^n$ satisfies $\N_{2,p}$ for some $p\geq 1$ then the inner projection from any smooth point of $X$ satisfies at least property $\N_{2,p-1}$. So it is natural to ask whether the inner projection from any smooth point of $X$ satisfies at least property $\N_{3,p-1}$ when $X$ satisfies $\N_{3,p}$ for some $p\geq 1$. Our result shows that this is not true in general. For examples, if we consider the secant variety $X={\rm Sec}(C)$ of a rational normal curve $C$ then the inner projection $Y$ from any smooth point of $X$ has the degree
$$\deg(Y)=\binom{2+e}{2}-1=\binom{e+1}{2}+\binom{e}{1}>\binom{2+(e-1)}{2},$$
where $e={\rm codim(X)}$ and $e-1={\rm codim(Y)}$. This implies that $X$ satisfies $\N_{3,e}$ but $Y$ does not satisfy $\N_{3,e-1}$.
\end{example}

\begin{example}
Remark that there exists an {\it algebraic set} $X$ of degree $<\binom{e+2}{2}$ whose defining ideal $I_X$ has $3$-linear resolution.
For example, let $I=(x_0^3, x_0^2x_1, x_0x_1^2, x_1^3, x_0^2x_2)$ be a monomial ideal of $R=k[x_0,x_1,x_2,x_3]$.
Note that the sufficiently generic distraction $D_{\mathcal L}(I)$ of $I$ is of the form
$$D_{\mathcal L}(I)=(L_1L_2L_3, L_1L_2L_4, L_1L_4L_5, L_4L_5L_6, L_1L_2L_7),$$
where $L_i$ is a generic linear form for each $i=1,\ldots, 7$ (see \cite{BCR} for the definition of distraction). Then the algebraic set $X$ defined by the ideal $D_{\mathcal L}(I)$ is an union of $5$ lines and one point such that its minimal free resolutions are given by

{\tiny
\begin{longtable}{ c  c  c  c l}
\hline\\
 & {\bf $R$-modules} & {\bf $S_1$-modules} & {\bf $S_2$-modules} \\
\hline\\
 & {\tiny\texttt{
 \begin{tabular}{c|ccccccccccccccccccccccccccccc}
           &0 & 1 & 2 & 3 \\[1ex]
        \hline\\
        0 & 1 & 0 &  0 & 0  \\[1ex]
        1 & 0 & 0 &  0 & 0  \\[1ex]
        2 & 0 & 5 &  5 & 1  \\[1ex]
  \end{tabular}
}}  & {\tiny\texttt{
 \begin{tabular}{l|lllll}
           &0 & 1 & 2 &\\[1ex]
        \hline\\
        0 & 1 & 0 & 0\\[1ex]
        1 & 1 & 0 & 0 \\[1ex]
        2 & 1 & 4 & 1 \\[1ex]
  \end{tabular}
}} & {\tiny\texttt{
 \begin{tabular}{l|lllll}
           &0 & 1 \\[1ex]
        \hline\\
        0 & 1 & 0 \\[1ex]
        1 & 2 & 0  \\[1ex]
        2 & 3 & 1  \\[1ex]
  \end{tabular}
}}\\[1ex]
\hline
\end{longtable}
}

   In this case, we see that $e=2$, $\deg(X)=5<\binom{2+2}{2}=6$ and there is a $6$-secant $2$-plane to $X$. We see that a general hyperplane section of $X$ is contained in a quadric hypersurface from $\beta_{e+1,2}(R/I_X)\neq 0$.
\end{example}

From Remark~\ref{remk:2}, we know that if $X$ satisfies $\N_{d, e}$, $(d\geq 2)$ then every linear section $X\cap L^{e}$ of dimension zero is $d$-regular, where $L^{e}$ is a linear space of dimension $e$. Moreover we can verify that
$$\deg(X\cap L^{e})\leq 1+\alpha+ \ds\sum_{t=2}^{d-1} \beta^{S_{e}}_{0,t}\leq  \binom{e+d-1}{d-1}.$$

We close the paper with the following question.\\
{\bf Question.}
 Let $X$ be a non-degenerate algebraic set of dimension $n$ in $\mathbb P^{n+e}$ satisfying $\N_{d,e}$. Then we have $\deg(X)\leq  \binom{e+d-1}{d-1}$. Suppose that equality holds. Is always $X$ a reduced aCM scheme whose defining ideal has $d$-linear resolution?

\begin{center}
{\sc Acknowledgements}
\end{center}
We are thankful to F.-O. Schreyer for personal communications concerning examples of non $3$-regular algebraic sets satisfying $\N_{3,e}$ by using Boij-S\"{o}derberg theory. We are also grateful to the anonymous referees for valuable and helpful suggestions. In addition, the program Macaulay 2 has been useful to us in computations of concrete examples.
\bibliographystyle{plain}

\end{document}